\newtheorem*{theorem*}{Theorem}
\newtheorem{theorem}{Theorem}[section]
\newtheorem{lemma}[theorem]{Lemma}
\newtheorem*{lemma*}{Lemma}
\newtheorem{claim}[theorem]{Claim}
\newtheorem{prop}[theorem]{Proposition}
\theoremstyle{definition} \newtheorem{remark}[theorem]{Remark}
\DeclareMathOperator{\proj}{proj}
\DeclareMathOperator{\prx}{pr_1}
\DeclareMathOperator{\pry}{pr_2}
\DeclareMathOperator{\lar}{large}
\DeclareMathOperator{\sma}{small}
\def\rr{{\mathbb R}}
\def\cc{{\mathbb C}}
\def\nn{{\mathbb N}}
\def\su{\subset}
\def\al{\alpha}
\def\be{\beta}
\def\ga{\gamma}
\def\Ga{\Gamma}
\def\de{\delta}
\def\om{\omega}
\def\la{\lambda}
\def\ep{\varepsilon}
\def\si{\sigma}
\def\ti{\widetilde}
\def\diam{{\rm diam}\, }
\def\dim{{\rm dim}\, }
\def\leb{\mathcal{L}}
\def\hau{\mathcal{H}}
\def\lkb{\lesssim}
\def\gkb{\gtrsim}
\def\phi{\varphi}
\def\te{\theta}
\def\rde{\de^s}
\providecommand{\semmi}[1]{}
\begin{document}

\title[Hausdorff dimension of Besicovitch sets of Cantor graphs]{Hausdorff dimension of Besicovitch sets of Cantor graphs}
\author{Iqra Altaf, Marianna Cs\"ornyei, Korn\'elia H\'era}

\address
{Department of Mathematics, The University of Chicago,
5734 S. University Avenue, Chicago, IL 60637, USA}

\email{iqra@uchicago.edu}

\email{csornyei@math.uchicago.edu}

\address
{Mathematical Institute, University of Bonn,
Endenicher Allee 60, 53115 Bonn, Germany
}

\email{herakornelia@gmail.com}

\keywords{Hausdorff dimension, Besicovitch set, rectifiable curve, Cantor staircase}

\subjclass[2010]{28A78}

\thanks{The third author was supported  
by the 
Hungarian National Research, Development and Innovation Office - NKFIH, 124749.} 

\begin{abstract}
We consider the Hausdorff dimension of planar Besicovitch sets for rectifiable sets $\Ga$, i.e. sets that contain a rotated copy of $\Ga$ in each direction. We show that for a large class of Cantor sets $C$ and Cantor-graphs $\Ga$ built on $C$, the Hausdorff dimension of any $\Ga$-Besicovitch set must be at least $\min\left(2-s^2,\frac{1}{s}\right)$, where $s=\dim C$. 
\end{abstract}

\maketitle

\section{Introduction}
\label{introd}

It is well known that Besicovitch sets in the plane - sets containing a unit line segment in every direction - can have zero Lebesgue measure, and must have Hausdorff dimension 2. It is natural to ask what happens for other curves in place of the line segment. For circular arcs, the problem is trivial, as we can rotate the circular arc around the center of the circle to obtain a Besicovitch set for that circular arc, of dimension 1. For other curves $\Ga$, it is worthwhile to ask first whether it is possible to obtain $\Ga$-Besicovitch sets  - sets containing a rotated (and translated) copy of $\Ga$ in every direction - of zero Lebesgue measure. 
In \cite{CC}, A. Chang and the second author showed that for any rectifiable planar curve $\Ga$, there exists an ``almost-$\Ga$-Besicovitch set" - a set containing a rotated (and translated) copy of a full $\hau^1$-measure subset of $\Ga$ in each direction - of zero Lebesgue measure. In fact, they showed that $\Ga$ can be continuously turned  around covering zero Lebesgue measure, if at each time moment, we are allowed to remove a set of $\hau^1$-null measure of $\Ga$. For closely related results about the problem of moving curves continuously while covering arbitrarily small measure, see \cite{CHL} and \cite{HL}. 

The existence of the above mentioned ``almost-$\Ga$-Besicovitch sets" of measure zero for rectifiable curves $\Ga$ suggests that it is interesting to consider the Hausdorff dimension of $\Ga$-Besicovitch sets (or of ``almost-$\Ga$-Besicovitch sets" - one naturally expects that in terms of dimension, there is no difference between the two). It turns out that for sufficiently smooth curves, the answer is very satisfactory. As observed by A. Chang, it follows from the results of J. Zahl in \cite{Z} that for any $C^{\infty}$ curve $\Ga$ that is not a circular arc, any $\Ga$-Besicovitch set must have Hausdorff dimension 2. 
In fact, the same holds under the relaxed assumption that $\Ga$ is a $C^3$ curve, see \cite{PYZ}.
Curvature plays an important role here.
Another interesting fact, which has been mentioned in \cite{CC}, is that if $\Ga$ is a countable union of (not necessarily concentric) circles, then there exists a $\Ga$-Besicovitch set of Hausdorff dimension $1$. The next natural question could be: what happens for non-smooth curves that behave differently from circular arcs? In fact, it would be interesting to consider objects whose behaviour is much closer to the behaviour of line segments than of circular arcs. Two immediately visible reasons for the special role of circular arcs are rotational symmetry and constant curvature. There is a third, probably most relevant aspect: the geometry of normal lines. 
As a special case of the results in \cite{CC}, it is known that if $\Ga$ is a rectifiable set possessing a continuous tangent field, then $\Ga$ can be continuously turned  around covering zero Lebesgue measure, if at each time moment $t$, we are allowed to choose a line $\ell_t$ and delete the points of $\Ga$ at which the normal line is equal to $\ell_t$. When $\Ga$ is a line segment, all of its normal lines are parallel, while in case $\Ga$ is a cirular arc, all of its normal lines point in different directions. Based on this, it would be interesting to consider rectifiable curves that, at almost every point, have parallel normal lines. 

In this paper, as a first attempt in this direction, we consider Besicovitch sets of Cantor graphs. More precisely, we consider a large class of (nearly self-similar) Cantor sets $C$ (see Section \ref{caset}), and build  rectifiable sets $\Ga$ on $C$ that resemble  Cantor-graphs (see Section \ref{cagra}). This is a generalization of the usual Cantor function, a.k.a. Devil's staircase built on the standard middle third Cantor set, noting that naturally, we only consider the Cantor-part of the graph - the horizontal line segments are not included. 
We show that the Hausdorff dimension of $\Ga$-Besicovitch sets must be at least $\min\left(2-s^2,\frac{1}{s}\right)$, where $s=\dim C$. 
As it turns out, it is important that our objects possess some structure. 
In an upcoming paper, see \cite{A}, the first author shows that for rectifiable graphs $\Ga$ built on general Cantor-type sets, the corresponding $\Ga$-Besicovitch set can even have dimension one.

The paper is organized as follows. In Section \ref{cant} we introduce the main setting: we define Besicovitch sets of general Cantor graphs built on Cantor sets, and we state our main result, Theorem \ref{dimb}. Section \ref{main} contains the main body of our proof: the proof of Proposition \ref{areas}. 
Finally, in Section \ref{pigeon} we include a standard argument that deduces the Hausdorff dimension bound of Theorem \ref{dimb} from Proposition \ref{areas}.

\subsection{Notation}
The 2-dimensional Lebesgue measure is denoted by $\leb$, and the 1-dimensional Lebesgue measure by $\la$. 
The open ball of center $x$ and radius $r$ is denoted by $B(x,r)$.
For a set $U \su \rr^n$, $U(\de)=\bigcup_{x \in U} B(x,\de)$ denotes the open $\de$-neighborhood of $U$, and $\diam (U)$ denotes the diameter of $U$. 
For any $s \geq  0$, $\de \in (0,\infty]$, the $s$-dimensional Hausdorff $\de$-premeasure is denoted by $\hau^s_{\de}$, and the 
$s$-dimensional Hausdorff measure is denoted by $\hau^s$. We use the notation $\dim A$ for the Hausdorff dimension of $A \su \rr^n$. 
For the well known properties of Hausdorff measures and dimension, see e.g. \cite{Fa}.

For a finite set $A$, let $|A|$ denote its cardinality. 
We write $x \lkb y$ for  $x \leq Cy$ where $C$ is an absolute constant or a constant depending only on $a$ and $b$, where $a, b$ are fixed constants, see the the next section.  We write $x \approx y$ if $x  \lkb y$ and $y  \lkb x$. 
Let $\prx$, $\pry$ denote the projections onto the $x$ and $y$ coordinate axes, respectively. 

\section{The setup and our main result}
\label{cant}
\subsection{Cantor sets}
\label{caset}
We describe a general Cantor set construction, where the ratio of removed intervals will be fixed throughout the process. 
Let $a \in \nn, a \geq 3$, $b \in \nn, 2 \leq b < a$, and let $J=\{d_0, d_1, \dots, d_{b-1}\} \su \{0,1,\dots a-1\}$ be a digit set such that $0 \leq d_0 \leq d_1 \leq \dots \leq d_{b-1} \leq a-1$.  
Partition $[0,1]$ into $a$ closed intervals $I_0,\dots, I_{a-1}$ of equal length (denoted this way from left to right). We keep $I_{d_0},\dots, I_{d_{b-1}}$, and remove the interiors of the remaining $a-b$ intervals. Let $C_1=I_{d_0} \cup I_{d_1} \cup \cdots \cup I_{d_{b-1}}$. We will iterate this process below to define $C_n$, $n=1,2,\dots$, such that the next digit set (of cardinality $b$) will be chosen independently in each subinterval in each step. 

For each $x_1 \in J$, let  $J_{(x_1)} \su \{0,1,\dots a-1\}$ with $|J_{(x_1)}|=b$. Whenever $(x_1\dots x_n)$ is a sequence of length $n$ such that $J_{(x_1\dots x_n)}$ is already defined 
and $x_{n+1} \in J_{(x_1\dots x_n)}$, let $J_{(x_1\dots x_n x_{n+1})}  \su \{0,1,\dots a-1\}$ with $|J_{(x_1\dots x_n x_{n+1})}|=b$. 
For notational convenience we wright $J=J_{(x_1x_0)}$.
Let 
$$C_n=\bigcup_{\substack{x_j \in J_{(x_1\dots x_{j-1})}, \\ j=1,\dots,n}} \left[\sum_{j=1}^n \frac{x_j}{a^j}, \sum_{j=1}^n \frac{x_j}{a^j} + \frac{1}{a^n}\right].$$ 
By construction, $C_n$ is a union of $b^n$ intervals of length $1/a^n$, and the lengths of the gaps between the intervals in $C_n$ are integer multiples of $1/a^n$. Let $C=\bigcap_{n=1}^\infty C_n$, our Cantor set. By construction, 
$C=\left\{\sum_{j=1}^\infty \frac{x_j}{a^j}: x_j \in J_{(x_1\dots x_{j-1})} \ \forall \ j \right\}$.  
Let $s=\log b / \log a=\dim C$. In the special case when each $J_{(x_1\dots x_n)}=J$, $C$ is a self-similar set.

\subsection{Cantor graphs}
\label{cagra}
Now we define a general Cantor graph construction built on $C$. Whenever $J_{(x_1\dots x_n)}$ is defined, let 
$\sigma_{(x_1\dots x_n)}: J_{(x_1\dots x_n)} \to \{0,1,\dots,b-1\}$ be an arbitrary bijection. 
Define $$
\Ga=\left\{\left(\sum_{j=1}^\infty \frac{x_j}{a^j},\sum_{j=1}^\infty \frac{\si_{(x_1\dots x_{j-1})}(x_j)}{b^j}\right): x_j \in J_{(x_1\dots x_{j-1})} \ \forall \ j \right\}.
$$
Note that in the special case when each $\sigma_{(x_1\dots x_n)}$ is order preserving, $\Ga$ is the graph of a monotone function, in fact, it is the classical devil's staircase graph built on $C$. 
We also note that in some special cases $\Ga$ is not precisely a graph, since at the points $x$ where the representation $x=\sum_{j=1}^\infty \frac{x_j}{a^j}$ is not unique we might have two points above $x$, but this is irrelevant for our purposes.

We list some well-known or easily verifiable properties of our Cantor graphs. 
\begin{enumerate}[i)]

\item 
$\dim \Ga=1$, and $\hau^1 \restriction \Ga=\la  \restriction \pry (\Ga)$.

\item
$\Ga$ is rectifiable, and for $\hau^1$-almost all $p \in \Ga$, the tangent to $\Ga$ at $p$ is vertical. 
\end{enumerate}

We will consider the natural approximation to $\Ga$ consisting of  $b^n$ rectangles of size $\frac{1}{a^n} \times \frac{1}{b^n}$: 
\begin{align*}
\bigcup_{\substack{x_j \in J_{(x_1\dots x_{j-1})}, \\ j=1,\dots,n}} & \left[\sum_{j=1}^n \frac{x_j}{a^j}, \sum_{j=1}^n \frac{x_j}{a^j} + \frac{1}{a^n}\right] \times \\ 
& \left[\sum_{j=1}^n \frac{\si_{(x_1\dots x_{j-1})}(x_j)}{b^j}, \sum_{j=1}^n \frac{\si_{(x_1\dots x_{j-1})}(x_j)}{b^j}+\frac{1}{b^n}\right].
\end{align*}
For technical reasons, we will also consider the 3 times larger rectangles: 
\begin{align}
\label{eqapp}
 R_n = 
\bigcup_{\substack{x_j \in J_{(x_1\dots x_{j-1})}, \\ j=1,\dots,n}} & \left[\sum_{j=1}^n \frac{x_j}{a^j}-\frac{1}{a^n}, \sum_{j=1}^n \frac{x_j}{a^j} + \frac{2}{a^n}\right] \times \\ 
& \left[\sum_{j=1}^n \frac{\si_{(x_1\dots x_{j-1})}(x_j)}{b^j}-\frac{1}{b^n}, \sum_{j=1}^n \frac{\si_{(x_1\dots x_{j-1})}(x_j)}{b^j}+\frac{2}{b^n}\right]. \nonumber
\end{align}

\subsection{Besicovitch sets of Cantor graphs}

We say that $E \su \rr^2$ is a $\Ga$-Besicovitch set, if $E$ contains a rotated (and translated) copy of $\Ga$ in every direction. 
That is, for every $\te \in  [0,\pi]$ there exists $\om_\te \in \rr^2$ such that $E=\bigcup_{\te \in  [0,\pi]} \tau_\te(\Ga)$, where 
$\tau_\te(z)=e^{i\te}z+\om_\te$ for $z \in \cc$. Here we used the identification $z=(z_1,z_2) \leftrightarrow z=z_1+iz_2$ between $\rr^2 $ and $\cc$ 
which will be used occasionally in the paper for convenience. 
Denote $\Ga_\te=\tau_\te(\Ga)$. Recall that $s=\dim C=\log b / \log a$. 
Our main result is the following.

\begin{theorem}
\label{dimb} 
Let $E \su \rr^2$ be a Borel $\Ga$-Besicovitch set. Then 
$$\dim E \geq \min\left(2-s^2,\frac{1}{s}\right).
$$
\end{theorem}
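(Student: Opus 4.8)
The plan is to run a multiscale $\delta$-discretized incidence/area argument, following the classical Besicovitch-set strategy adapted to the Cantor-graph setting. For a fixed small $n$, the building block is Proposition \ref{areas} (stated later in the paper), which should bound below the Lebesgue measure of the union $\bigcup_\theta R_n(\theta)$ of the $n$-th level rectangle-neighborhoods of the rotated copies $\Gamma_\theta$, taken over a $\approx 1/b^n$-separated set of directions $\theta$. The key point is that each $\Gamma_\theta$ is covered by $b^n$ rectangles of size $\frac{1}{a^n}\times\frac{1}{b^n}$ whose \emph{long} sides are (nearly) perpendicular to direction $\theta$, so two copies $\Gamma_\theta$, $\Gamma_{\theta'}$ with $|\theta-\theta'|\approx\phi$ can overlap in a rectangle only in a region of measure $\lesssim \frac{1}{b^n}\cdot\min\!\left(\frac{1}{a^n},\frac{1}{b^n\phi}\right)$, the second term coming from the transversality of two long thin rectangles meeting at angle $\phi$. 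Summing this bushiness estimate over all pairs of directions and applying Cauchy--Schwarz to pass from an $L^2$ (overlap) bound to an $L^1$ (measure) lower bound yields $\leb\big(\bigcup_\theta R_n(\theta)\big)\gtrsim b^{-n\alpha}$ for the exponent $\alpha$ matching $2-\min(2-s^2,1/s)$.

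Concretely, I would first set up the discretization: fix $n$, choose directions $\theta_k$, $k=1,\dots,\approx b^n$, that are $\approx b^{-n}$-separated in $[0,\pi]$ (this is the natural spacing since at scale $1/b^n$ — the short dimension of the rectangles — finer angular resolution buys nothing), and let $F_n=\bigcup_k R_n(\theta_k)\cap E(\text{something})$. Then I would estimate $\int (\sum_k \mathbf{1}_{R_n(\theta_k)})^2 = \sum_{k,k'} \leb(R_n(\theta_k)\cap R_n(\theta_{k'}))$ using the transversal-rectangles lemma above, pairing up the $b^n\times b^n$ rectangle pairs; the diagonal-ish terms with $|\theta_k-\theta_{k'}|\lesssim a^{-n}/b^{-n}=b^{n}a^{-n}$ (i.e.\ $\lesssim b^{n(1-1/s)}$, wait — $a^{-n}=b^{-n/s}$, so the threshold angle is $b^{-n/s+n}$) behave like full-overlap $\frac{1}{a^nb^n}$, while beyond that the $\frac{1}{b^{2n}\phi}$ tail governs. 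This is where the two regimes in $\min(2-s^2,1/s)$ come from: when $s$ is small, $b^{-n/s+n}$ may exceed $1$ and the transversal tail never kicks in over the full range, giving the $1/s$ bound; when $s$ is larger, the tail dominates and one gets $2-s^2$ (the $s^2$ rather than $s$ is exactly the gain from the \emph{structured}, nearly-self-similar placement of the $b^n$ rectangles along $C$, as opposed to arbitrary placement which would only give $2-s$). Having bounded the $L^2$ sum by (roughly) $b^{n(2-\min(2-s^2,1/s))}$ while $\int \sum_k \mathbf 1_{R_n(\theta_k)} = \sum_k \leb(R_n(\theta_k)) \gtrsim b^n \cdot a^{-n}b^{-n} = b^{n(1-1-1/s)}\cdot b^n= b^{n(1-1/s)}$... one applies Cauchy--Schwarz, $(\sum_k\leb(R_n(\theta_k)))^2 \le \leb(F_n)\cdot\sum_{k,k'}\leb(R_n(\theta_k)\cap R_n(\theta_{k'}))$, to extract $\leb(F_n)\gtrsim b^{-n(2-\dim)}$ with $\dim=\min(2-s^2,1/s)$.

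The final step passes from these scale-$n$ area lower bounds to a Hausdorff dimension lower bound for $E$ itself. Since $E\supseteq\Gamma_{\theta_k}$ for each $k$, and $\Gamma_{\theta_k}\subseteq R_n(\theta_k)$, any cover of $E$ by small sets induces (after fattening to dyadic-type scale $\approx b^{-n}$) a cover of a large portion of $\bigcup_k R_n(\theta_k)$; the area lower bound $\gtrsim b^{-n(2-\dim)}$ then forces such a cover to have $\hau^{\dim}_{b^{-n}}$-content bounded below by a constant independent of $n$, via the standard ball-counting inequality (a cover of a set of measure $\geq m$ by balls of radius $r$ needs total $r^t$-mass $\gtrsim m\cdot r^{t-2}$, which is $\gtrsim 1$ exactly when $t\le\dim$). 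Letting $n\to\infty$ gives $\hau^{\dim}(E)>0$, hence $\dim E\ge\min(2-s^2,1/s)$. The main obstacle — and the technical heart of the paper — is the bushiness/transversality estimate with the correct $s^2$ exponent: one must exploit that the $b^n$ rectangles approximating $\Gamma$ are not placed arbitrarily but lie over the arithmetically structured Cantor set $C$ (gap lengths integer multiples of $a^{-n}$, self-similar-type nesting), so that for most pairs of directions the rotated rectangle-families are genuinely transverse at the generic point; controlling the exceptional directions and scales where this structure degenerates, uniformly in $n$, is the delicate part and is presumably the content of Proposition \ref{areas}.
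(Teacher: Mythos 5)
Your overall architecture --- discretize the directions, bound $\sum_{\te,\te'}\leb(R_\te\cap R_{\te'})$ via a transversality estimate, apply Cauchy--Schwarz to get a lower bound on $\leb(E(\de))$ at each scale $\de$, then pass from Minkowski to Hausdorff dimension by a standard pigeonholing argument --- is exactly the paper's route. However, your discretization rests on an inverted picture of the approximating rectangles, and this breaks the quantitative argument. The rectangles have size $\frac{1}{a^n}\times\frac{1}{b^n}$ with $b<a$, so the \emph{short} side is $\de=a^{-n}$ and the long side is $\de^s=b^{-n}$; you treat $b^{-n}$ as the short dimension. Consequently you take only $\approx b^n$ directions that are $b^{-n}$-separated, whereas the correct (and the paper's) choice is a maximal $\de$-separated set of $\approx a^n$ directions: the angular resolution is set by the thickness $\de$ of the neighborhood of the diameter-one set $\Ga$, not by a side of an individual rectangle. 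With only $b^n$ directions your own exponent bookkeeping does not close: you get $\sum_k\leb(R_n(\te_k))\approx b^{n(1-1/s)}$ and an overlap sum $\approx b^{n(2-d)}$, so Cauchy--Schwarz yields $\leb(F_n)\gtrsim b^{n(d-2/s)}$ rather than the needed $b^{-n(2-d)}$; these agree only when $s=1$. The same inversion corrupts your transversality bound: two rectangles of width $a^{-n}$ meeting at angle $\phi$ overlap in measure $\lesssim a^{-2n}/\phi$, not $b^{-2n}/\phi$ (this is Lemma \ref{int}). Your account of the two regimes is also backwards: the bound is $1/s$ for $s$ \emph{large} (namely $s\ge(\sqrt{5}-1)/2$) and $2-s^2$ for $s$ small, and the threshold angle $\de^{1-s}=b^{n(1-1/s)}$ is always less than $1$, so the dichotomy cannot come from that threshold leaving the admissible range of angles.

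The more substantive gap is that the entire content of Proposition \ref{areas} --- improving the trivial pair count $L(\de,\te)\lesssim\de^{-s}$ to bounds of the form $\de^{-s}\max(\de/|\te|,|\te|^s)$ for small angles and the rescaled versions for large angles, which is where the exponents $s^2$ and $1/s$ actually arise --- is left as an unproved black box. You correctly identify that the arithmetic structure of $C$ must be exploited, but you supply no mechanism; the paper's mechanism is a decomposition of the rectangle corners into ``large'' and ``small'' digit blocks at an intermediate scale $a^{-m}\approx|\te|$ together with counting modulo $1/a^m$ and $1/b^m$ (Lemmas \ref{sangle}, \ref{langle}, Claim \ref{mtheta}), plus a separate two-scale rescaling for large angles (Lemma \ref{cl1}). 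Without some version of this, your argument reproduces only the easy bound $2-s$ of Remark \ref{remeasy}. Note also that Proposition \ref{areas} is an \emph{upper} bound on the sum of pairwise intersection areas, not, as you describe it, a lower bound on the measure of the union; the latter is what one \emph{derives} from it via Cauchy--Schwarz.
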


\section{Main proof}
\label{main}

Let $\de=\frac{1}{a^n}$ for some $n \in \nn$, and write $R=R_n=\bigcup_{i=1}^{b^n} T_i$ for the union of renctagles defined in \eqref{eqapp},  where the rectangles are indexed using their vertical order. That is, $\min \pry(T_1) < \min \pry(T_2) < \dots < \min \pry(T_{b^n})$, and $T_i$ is a rectangle of size $3\de \times 3\rde$ for each $i$. 
By construction, it is easy to see that $\Ga(\de) \su R$, and that $\leb(R)\leq 
9\de$. 
Write $R_\te=\tau_\te(R)$.

First we show that each fixed rectangle of $R$ can intersect at most 10 rectangles of $R_\te$. 
\begin{lemma}
\label{simple1}
Let $\te \in   [0,\pi]$ and $i \in \left\{1,\dots,b^n\right\}$. Then 
$$|\left\{j \in \{1,\dots,b^n\}: T_i \cap T_{j,\te} \neq \emptyset \right\}| \leq 10.$$
\end{lemma}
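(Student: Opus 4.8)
The key geometric fact is that each $T_i$ is a rectangle of dimensions $3\de \times 3\rde$ with $\rde = \de^s$, so since $s < 1$ we have $\rde \gg \de$; that is, $T_i$ is a "tall thin" rectangle, essentially a vertical segment of length $\approx 3\rde$ thickened by $3\de$. The rectangles $T_{j,\te}$ of $R_\te = \tau_\te(R)$ are congruent copies of these, rotated by $\te$. The plan is to control the intersection in two complementary regimes, depending on how close $\te$ is to $0$ (equivalently, how close the $T_{j,\te}$ are to vertical).

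First I would record the structural facts about the placement of the $T_j$'s: the $T_j$ are indexed by vertical order, their vertical projections $\pry(T_j)$ are intervals of length $3\rde$ whose left endpoints are at heights of the form $\sum \si(x_j)/b^j - 1/b^n$, and crucially these heights, being $b$-adic rationals with denominator $b^n$, are separated by integer multiples of $\rde = 1/b^n$; hence consecutive vertical projections overlap in a controlled way and $\pry(R) \su [-\rde, 1+\rde]$. Similarly $\prx(T_j)$ are intervals of length $3\de$ with left endpoints separated by integer multiples of $\de = 1/a^n$. So $R$ is contained in a $(1+2\de)\times(1+2\rde)$ box, and within any horizontal strip of height $\rde$ there are $O(1)$ of the $T_j$'s (in fact the vertical projections have bounded overlap, so a strip of height $c\rde$ meets $O(c)$ of them).

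Now fix $T_i$ and suppose $T_i \cap T_{j,\te} \neq \emptyset$. Since $\diam(T_i) \approx \rde$, the set of $j$ with $T_{j,\te}$ meeting $T_i$ all have $T_{j,\te}$ contained in the $O(\rde)$-neighborhood of $T_i$, which is a set of diameter $O(\rde)$. If $\te$ is bounded away from $0$ and $\pi$, say $|\sin\te| \gtrsim 1$, then $\prx(T_{j,\te})$ has length $\gtrsim \rde$ (the rotated tall rectangle now has horizontal extent comparable to $\rde$), while the $\prx$-left-endpoints of the $T_j$ are $\de$-separated — but after rotation the relevant separation is in the direction $e^{i\te}$ of the long axis, and the long-axis coordinates of the $T_j$ (before rotation, their $\pry$-coordinates) are $\rde$-separated; so only $O(1)$ of the $T_{j,\te}$ can fit inside a region of diameter $O(\rde)$. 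If instead $\te$ is near $0$ (or $\pi$), then $T_{j,\te}$ is nearly vertical, the relevant separation is again in the near-vertical long-axis direction where the $\pry$-coordinates are $\rde$-separated, and the same counting gives $O(1)$. The numerical constant $10$ then comes from carefully bounding the overlap of the vertical projections together with the factor-$3$ enlargement in \eqref{eqapp}; I would not optimize it but just check that a crude count of "how many $\rde$-separated points fit in an interval of length $\le$ (something like) $4\rde$" stays $\le 10$.

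The main obstacle is the intermediate range of $\te$ and making the two-regime argument uniform: when $\te$ is neither close to $0$ nor of order $1$, one must verify that in \emph{every} direction, the long axis of $T_{j,\te}$ is far enough from being an axis along which the $T_j$ are only $\de$-separated. The clean way to handle this is to note that regardless of $\te$, any two distinct $T_j, T_{j'}$ with $T_{j,\te}, T_{j',\te}$ both meeting the fixed $T_i$ must have their \emph{centers} within $O(\rde)$ of each other; but the centers of $T_j$ and $T_{j'}$ differ by a vector whose $\pry$-component is a nonzero integer multiple of $\rde$ (as the $y$-left-endpoints are $\rde$-separated and projections have length $3\rde$, distinct $j$ give distinct heights), so after rotation their distance is still the same $\gtrsim \rde$ — rotation is an isometry — forcing $O(1)$ many such $j$. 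This isometry observation sidesteps the case analysis entirely and is what I expect the author's proof to use; the only remaining care is extracting the explicit bound $10$ from the precise constants $3\rde$, $\rde$-separation, and the $O(\rde)$-neighborhood size.
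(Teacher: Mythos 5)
Your final paragraph is exactly the paper's proof: rotation is an isometry, so $d(T_{j,\te},T_{j',\te})=d(T_j,T_{j'})\geq(|j'-j|-3)\rde$ by the $\rde$-separation of the $\min\pry(T_j)$'s, and this exceeds $\diam T_i<6\rde$ once $|j'-j|$ is large, forcing the intersecting indices into a window of $O(1)$ consecutive values. The two-regime case analysis in your earlier paragraphs is unnecessary, as you yourself correctly observe.
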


\begin{proof}
We have $\diam T_i=3\sqrt{\de^2+\de^{2s}} < 6 \rde$. Let $j,j' \in \left\{1,\dots,b^n\right\}$ with $j'-j>10$. 
Since $\min(\pry(T_{j+1}))-\min(\pry(T_j))=\rde$ and $\la(\pry(T_j))=3\rde$,
we get that 
$$d(T_{j,\te},T_{j',\te})=d(T_j,T_{j'}) \geq 10\rde-3\rde > \diam T_i.$$
This means that $T_{j,\te}$ and $T_{j',\te}$ can not  simultaneously intersect $T_i$. 
Therefore, if $T_{j-k,\te}, T_{j-k+1,\te}, \dots, T_{j,\te},\dots, T_{j+l-1,\te}, T_{j+l,\te}$ all intersect $T_i$, then we must have $j+l-(j-k)=l+k \leq 10$, and so
$$|\left\{j \in \{1,\dots,b^n\}: T_i \cap T_{j,\te} \neq \emptyset \right\}| \leq 10.$$
\end{proof}

We introduce a notation for the number of intersecting pairs of rectangles of $R$ and $R_\te$, let 
\begin{equation}
\label{ldete}
L(\de,\te)=|\{(i,j) \in \left\{1,\dots,b^n\right\}: T_i \cap T_{j,\te} \neq \emptyset\}|. 
\end{equation}
We note that by Lemma \ref{simple1}, $L(\de,\te) \lkb |\{i: \ \text{there is} \ j \ \text{such that} \ T_i \cap T_{j,\te} \neq \emptyset\}|$, and the constant in $\lkb$ is independent of $\de$ and $\te$. Therefore, we trivially have $L(\de,\te) \lkb \de^{-s}$, which is the number of all the rectangles in $R$. 

We also need the following very easy observation about intersecting rectangles. 
\begin{lemma}
\label{int}
For any $\te \in  (0,1],i,j$, we have $\leb(T_i \cap T_{j,\te}) \lkb \de \cdot \rde$, and $\leb(T_i \cap T_{j,\te}) \lkb \frac{\de^2}{\te}$.
\end{lemma}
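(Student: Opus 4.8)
Lemma~\ref{int} asks for two bounds on the area of a single intersection $T_i \cap T_{j,\te}$, where $T_i$ is a $3\de \times 3\rde$ axis-parallel rectangle and $T_{j,\te}$ is the $\te$-rotation of another such rectangle. The first bound $\leb(T_i \cap T_{j,\te}) \lkb \de \cd \rde$ is essentially trivial: since $s \le 1$ we have $\rde = \de^s \ge \de$, so the shorter side of $T_i$ has length $3\de$; the intersection is contained in $T_i$, whose area is $9\de\rde \lkb \de\rde$. (One can alternatively note that $\leb(T_i) = 9\de\rde$ and $\leb(T_{j,\te}) = 9\de\rde$ are both of this order, and the intersection is contained in either one.)

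For the second bound $\leb(T_i \cap T_{j,\te}) \lkb \de^2/|\te|$, the plan is to use the fact that two long thin rectangles meeting at a positive angle can only overlap in a small parallelogram-like region. The key geometric observation: $T_i$ has width $3\de$ in the horizontal direction, so $T_i$ is contained in a horizontal strip of width $3\de$ — no, more precisely $T_i$ is contained between two vertical lines at horizontal distance $3\de$. Meanwhile $T_{j,\te}$ is a rectangle one of whose sides makes angle $\te$ with the vertical; intersecting $T_{j,\te}$ with a vertical strip of width $3\de$ cuts across the long direction of $T_{j,\te}$ (which is nearly vertical when $\te$ is small) only if that long direction is not too close to vertical — when $|\te|$ is bounded below, the portion of $T_{j,\te}$ inside a vertical strip of width $3\de$ has length $\lkb \de/\sin|\te| \approx \de/|\te|$ and width $3\de$, giving area $\lkb \de^2/|\te|$. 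I would make this rigorous by writing $T_{j,\te}$ in coordinates adapted to its own axes: the intersection, being contained in the vertical strip $\{|\prx(z) - c| \le 3\de/2\}$ for the appropriate center $c$, projects onto the long axis of $T_{j,\te}$ into an interval of length $\lkb \de/|\sin\te|$ (using $|\sin\te| \approx |\te|$ for $\te \in (0,\pi]$, up to the endpoint where one uses $|\sin\te|\approx \pi - \te$, but since the bound $\de^2/|\te|$ only needs to hold and is weakest there it is fine — actually one should be slightly careful near $\te = \pi$, but $T_{j,\pi}$ is again axis-parallel so the first bound $\de\rde \le \de^2/\pi \cdot (\rde/\de)\cdots$; in fact near $\te=\pi$ one just uses the symmetric argument with roles of the two rectangles swapped, or observes the claim is only asserted as an upper bound and $\de\rde$ may exceed $\de^2/|\te|$ there — so the statement implicitly intends $\te$ bounded away from $\pi$, or the reader should read $|\te| = \dist(\te, \{0,\pi\})$; I would just state it for $\te \in (0,\pi/2]$ and note the other half is symmetric).

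So concretely: fix the orthonormal frame $(u,v)$ where $u$ is the long direction of $T_{j,\te}$ (making angle close to $\te$ with the vertical) and $v$ the short direction. Then $T_{j,\te} = \{z : |\langle z - p, u\rangle| \le 3\rde/2,\ |\langle z-p,v\rangle| \le 3\de/2\}$ for its center $p$. On the other hand $T_i \subset \{z : |\prx(z) - q_1| \le 3\de/2\}$ for the $x$-center $q_1$ of $T_i$. Writing $e_1$ for the horizontal unit vector, we have $\prx(z) = \langle z, e_1\rangle$, and $e_1 = \alpha u + \beta v$ with $|\alpha| = |\langle e_1, u\rangle| \approx |\sin\te| \approx |\te|$ (since $u$ is within the small width-direction tilt of the vertical). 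Thus on $T_i \cap T_{j,\te}$, the coordinate $t := \langle z - p, u\rangle$ satisfies: $\prx(z)$ ranges over an interval of length $\le 3\de$, and $\prx(z) = \langle p, e_1\rangle + \alpha t + \beta \langle z-p, v\rangle$ with the $\beta$-term confined to an interval of length $\lkb \de$; hence $\alpha t$ is confined to an interval of length $\lkb \de$, so $t$ ranges over an interval of length $\lkb \de/|\te|$. Since also the $v$-coordinate of $z$ ranges over an interval of length $3\de$, the intersection is contained in a box (in the $(u,v)$-frame) of dimensions $\lkb (\de/|\te|) \times \de$, giving $\leb(T_i \cap T_{j,\te}) \lkb \de^2/|\te|$.

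The main obstacle is simply bookkeeping the constants and the behaviour near $\te = \pi$ (where the second bound is vacuous or should be interpreted with $|\te|$ meaning distance to $\{0,\pi\}$); the geometric content is elementary once one works in the frame adapted to $T_{j,\te}$, and the only quantitative input is $|\sin\te| \gtrsim |\te|$ on $(0,\pi/2]$ together with the smallness of the angle between $u$ and the vertical, which follows because $T_{j,\te}$ has aspect ratio tending to $0$ — wait, that last point is not needed: $u$ is \emph{exactly} the rotation by $\te$ of the vertical, so the angle between $u$ and $e_1$ is exactly $\pi/2 - \te$, giving $|\langle e_1, u\rangle| = |\sin(\pi/2-\te)|$— hmm, let me not belabor this; the clean statement is $|\langle e_1, u\rangle| = |\cos(\pi/2 - \te)|$ or equivalently $= |\sin \te|$ depending on the convention, and either way it is $\approx |\te|$ for small $\te$, which is all we use.

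\begin{proof}
Since $s \le 1$, we have $\rde = \de^s \ge \de$, so each $T_i$, being a $3\de \times 3\rde$ rectangle, has area $9 \de \rde$. As $T_i \cap T_{j,\te} \su T_i$, this immediately gives $\leb(T_i \cap T_{j,\te}) \le 9\de\rde \lkb \de \cd \rde$.

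For the second bound we may assume $\te \in (0,\pi/2]$; the case $\te \in (\pi/2,\pi]$ follows by symmetry, interchanging the roles of $T_i$ and $T_{j,\te}$ and replacing $\te$ by $\pi - \te$, using that $\leb(T_i \cap T_{j,\te}) = \leb(T_{i,-\te} \cap T_j)$ and that $T_{i,-\te}$ is the $(\pi - \te)$-rotation of an axis-parallel $3\de \times 3\rde$ rectangle. (If one prefers, one may simply note that when $\te$ is bounded away from $0$ and $\pi$ the bound $\de^2/|\te| \approx \de^2$ is implied by $\de\rde$ only if $\rde \lkb \de$, which fails; so the statement should be read with $|\te| = \dist(\te,\{0,\pi\})$, and we prove it in this sense.)

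Let $(u,v)$ be the orthonormal frame obtained by rotating $(e_2, e_1)$ by angle $\te$, where $e_1,e_2$ are the horizontal and vertical unit vectors, chosen so that $T_{j,\te} = \{z \in \rr^2 : |\langle z - p, u\rangle| \le \tfrac{3}{2}\rde,\ |\langle z - p, v \rangle| \le \tfrac{3}{2}\de\}$ for its center $p$. Then $\langle e_1, u\rangle = \sin\te$, so $|\langle e_1, u\rangle| \approx |\te|$ for $\te \in (0,\pi/2]$. Now $T_i$ lies in a vertical strip $\{z : |\prx(z) - q| \le \tfrac{3}{2}\de\}$ for some $q \in \rr$. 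For $z \in T_i \cap T_{j,\te}$, write $t = \langle z - p, u\rangle$ and $w = \langle z - p, v\rangle$, so that $|w| \le \tfrac{3}{2}\de$ and
\[
\prx(z) = \langle z, e_1 \rangle = \langle p, e_1\rangle + t\,\langle u, e_1\rangle + w\,\langle v, e_1\rangle.
\]
As $z$ ranges over $T_i \cap T_{j,\te}$, the left side varies in an interval of length $\le 3\de$, and the $w$-term varies in an interval of length $\le 3\de$ (since $|\langle v,e_1\rangle| \le 1$), so $t\,\langle u, e_1\rangle$ varies in an interval of length $\le 6\de$; since $|\langle u, e_1\rangle| \approx |\te|$, the $t$-coordinate ranges over an interval $I_u$ of length $\lkb \de/|\te|$. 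Also the $w$-coordinate ranges over an interval $I_v \su [-\tfrac{3}{2}\de,\tfrac{3}{2}\de]$ of length $\le 3\de$. Hence $T_i \cap T_{j,\te}$ is contained in the rectangle (in the $(u,v)$-frame) $I_u \times I_v$, so
\[
\leb(T_i \cap T_{j,\te}) \le |I_u| \cd |I_v| \lkb \frac{\de}{|\te|} \cd \de = \frac{\de^2}{|\te|}. \qedhere
\]
\end{proof}
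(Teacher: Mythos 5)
Your proof is correct and is essentially the paper's own argument: the first bound is trivial because $T_i\cap T_{j,\te}\su T_i$ and $\leb(T_i)=9\de\cdot\rde$, and the second is the standard fact that two strips of width $3\de$ meeting at angle $\te$ intersect in a region of area $\lkb \de^2/|\sin\te|$, which you simply rederive carefully in the frame adapted to $T_{j,\te}$. Your caveat about $\te$ near $\pi$ (where $|\sin\te|\gtrsim|\te|$ fails, so $\de^2/|\te|$ does not follow from $\de^2/|\sin\te|$) points at a genuine imprecision in the paper's statement rather than a gap in your argument, and reading $|\te|$ as the distance to $\{0,\pi\}$, as you propose, is the right fix.
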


\begin{proof}
Recall that $T_i$ and $T_{j,\te}$ are $3\de \times 3\rde$ rectangles with angle $\te$ between their longer side, the first inequality is trivial as $\leb(T_i) \lkb \de \cdot \rde$. 
The second statement is also trivial noting that the area of the intersection of two infinite strips of width $\de$ with angle $\te$ is at most $\frac{\de^2}{\sin \te}\lkb \frac{\de^2}{\te}$. 
\end{proof}

\begin{remark}
\label{remareas}
We have that $\frac{\de^2}{\te} > \de \cdot \rde$ if and only if $\te < \de^{1-s}$, so the first bound in Lemma \ref{int}, although trivial, is actually useful for some angles. 

\end{remark}

Now we let $\de_0 \leq 1$ be small enough (to be specified later), and fix $\de \leq \de_0$. Without loss of generality, we can assume that we only have rotated copies of $\Ga$ with angle in $[0,1]$. Let $A$ be a maximal $\de$-separated subset of $ [0,1]$, it is easy to see that $|A| \approx \frac{1}{\de}$. 

The main result of this section is the following. 

\begin{prop}
\label{areas}
For any $\de \leq \de_0$, we have 
\begin{equation}
\label{eqc0}
\sum\limits_{\te,\te' \in A} \leb(R_\te \cap R_{\te'}) \lkb \max\left(\de^{-s^2}, \de^{1/s-2}\right) \cdot \log\left(\frac{1}{\de}\right).
\end{equation}
\end{prop}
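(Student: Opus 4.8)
The plan is to estimate $\sum_{\te,\te' \in A} \leb(R_\te \cap R_{\te'})$ by first reducing, via rotation invariance of Lebesgue measure, to the quantity $\sum_{\te \in A} \leb(R \cap R_\te)$ up to a factor $|A| \approx \de^{-1}$ — or more precisely, by grouping pairs according to the relative angle $\psi = \te - \te'$ and writing $\leb(R_\te \cap R_{\te'}) = \leb(R \cap R_\psi)$, so that it suffices to bound $\sum_{\psi} \leb(R \cap R_\psi)$ over a $\de$-separated set of angles $\psi \in [-\pi,\pi]$ and then multiply by $O(\de^{-1})$. For a fixed $\psi$, I would break $\leb(R \cap R_\psi)$ into a sum over intersecting pairs of rectangles $(i,j)$ with $T_i \cap T_{j,\psi} \neq \emptyset$, and apply Lemma \ref{int}: each such intersection contributes at most $\min(\de \cdot \de^s, \de^2/|\psi|)$. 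By the definition of $L(\de,\psi)$ in \eqref{ldete}, this gives
\begin{equation}
\label{eqplan1}
\leb(R \cap R_\psi) \lkb L(\de,\psi) \cdot \min\left(\de^{1+s}, \frac{\de^2}{|\psi|}\right).
\end{equation}

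The heart of the argument is then a good upper bound on $L(\de,\psi)$, the number of intersecting rectangle pairs. Here is where the structure of the Cantor set must be used; I expect this to be the main obstacle, and presumably it is the content of a separate lemma (the combinatorial counting of how many rectangles $T_i$ in the vertical tower can meet the rotated tower $R_\psi$). The expected shape of the bound is something like $L(\de,\psi) \lkb \de^{-s} \cdot (\text{power of } |\psi|)$ for $|\psi|$ not too small, reflecting that tilting the second tower by angle $\psi$ spreads its $\de^s$-thick pieces over a vertical range $\approx |\psi|$, and that a typical column of $R$ meets $\approx |\psi| \cdot \de^{-s}$ — tempered by self-similarity — of the $T_j$'s; combined with the trivial bound $L(\de,\psi) \lkb \de^{-s}$ from Lemma \ref{simple1}, one interpolates. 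The crucial nearly-self-similar feature is that $R$, viewed at scale $\de^k$ for intermediate $k$, again looks like $b^k$ translated copies of a rescaled $R$, so the count can be set up recursively: the number of level-$k$ cells of $R$ that a tilted strip of width $\de^{s}$-ish meets is controlled, and within each such cell one recurses. Getting the exponent to come out as exactly $s^2$ (resp. $2-1/s$ after the two regimes are balanced) is the delicate point and will rely on the specific relation $s = \log b/\log a$.

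Granting such a bound on $L(\de,\psi)$, the remainder is bookkeeping: substitute into \eqref{eqplan1}, sum over the $\de$-separated $\psi$'s. The sum naturally splits at the threshold $|\psi| = \de^{1-s}$ flagged in Remark \ref{remareas}: for $|\psi| < \de^{1-s}$ one uses the bound $\de^{1+s}$ per intersection (and the trivial $L \lkb \de^{-s}$), contributing $\approx \de^{1-s} \cdot \de^{-1} \cdot \de^{-s} \cdot \de^{1+s} = \de^{1-s}$ times the per-$\psi$ count, which after multiplying by the outer $\de^{-1}$ is harmless; for $|\psi| \geq \de^{1-s}$ one uses $\de^2/|\psi|$ together with the $|\psi|$-dependent bound on $L$, and the resulting geometric-type sum over dyadic scales of $|\psi|$ produces the $\log(1/\de)$ factor (the endpoint/critical scale is where the two terms in the max balance, and summing $\sum 1/|\psi|$ over dyadic $|\psi|$ gives the logarithm). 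Collecting the two regimes and the two terms inside $L(\de,\psi)$ yields the claimed $\max(\de^{-s^2}, \de^{1/s-2})$; the first term dominates when $s^2 \leq 2 - 1/s$, i.e. roughly for larger $s$, matching the $\min(2-s^2, 1/s)$ in Theorem \ref{dimb} after the standard conversion from $L^2$-type energy sums to a dimension lower bound. I would keep careful track of which of the two bounds on $L$ is active in which $\psi$-range so that the final maximum emerges cleanly rather than with spurious logarithmic losses beyond the single $\log(1/\de)$.
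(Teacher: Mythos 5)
Your outline reproduces the paper's architecture --- fix a reference angle, bound $\leb(R\cap R_\te) \lkb L(\de,\te)\cdot\min\left(\de^{1+s},\de^2/|\te|\right)$, and sum over a $\de$-separated set of angles --- but it defers exactly the part that constitutes the proof: a non-trivial upper bound on $L(\de,\te)$. You acknowledge this (``presumably it is the content of a separate lemma''), but without it nothing beyond the easy $2-s$ bound of Remark \ref{remeasy} is established. In the paper this is the content of Lemmas \ref{sangle} and \ref{langle}: for small $|\te|$ one proves $L(\de,\te)\lkb \de^{-s}\max\left(\de/|\te|,|\te|^s\right)$ by an \emph{arithmetic} argument --- decompose the corners of the rectangles as $x_{\lar}+x_{\sma}$ at the intermediate scale $1/a^m\approx|\te|$, and use that the relevant coordinate differences are forced onto a lattice of spacing $1/a^m$ (resp.\ $1/b^m$) while being confined, via the rotation identity of Lemma \ref{simple3}, to an interval of length $\approx\de$ (resp.\ $\approx\de/|\te|$); for large $|\te|$ one rescales to $r\approx|\te|^{1/s}$ or $|\te|^{1/(1-s)}$, applies the small-angle bound at scale $r$, and controls the multiplicity inside each intersecting pair of level-$r$ cells by a separate transversality count at the third scale $\rho\approx|\te|\de^s$ (Lemma \ref{cl1}). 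Your recursive/self-similar heuristic is in the right spirit for the large-angle case, but the small-angle mechanism is lattice-arithmetic rather than recursive, and neither argument is carried out; in particular the exponents $s^2$ and $1/s-2$ cannot be extracted from the plan as written.

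There is also a concrete error in the bookkeeping you do carry out. In the regime $|\te|\le\de^{1-s}$ you propose to use the trivial bound $L(\de,\te)\lkb\de^{-s}$ together with the area bound $\de^{1+s}$ per intersection, getting $\approx\de^{-s}\cdot\de^{-s}\cdot\de^{1+s}=\de^{1-s}$ per reference angle, hence $\de^{-s}$ after the outer factor $|A|\approx\de^{-1}$, and you call this harmless. It is not: for every $s\in(0,1)$ one has $\de^{-s}>\max\left(\de^{-s^2},\de^{1/s-2}\right)$, since $\de^{-s}\le\de^{-s^2}$ would require $s\ge1$, and $\de^{-s}\le\de^{1/s-2}$ is equivalent to $2-s\ge1/s$, i.e.\ to $-(1-s)^2\ge0$, which also fails for $s<1$. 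So the small-angle regime genuinely requires the refined bound $L(\de,\te)\lkb\de^{-s}\max\left(\de/|\te|,|\te|^s\right)$ (which, after summing, yields the $\de^{1-s^2}$ term in \eqref{eqs90}); with the trivial bound one only recovers the $2-s$ dimension bound. The missing counting lemma is therefore not an optional refinement confined to large angles but is needed throughout.
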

Using the Cauchy-Schwarz inequality, Proposition \ref{areas} will easily yield the bound of Theorem \ref{dimb} for (lower) Minkowski dimension, we include the details below. A standard pigeonholing argument can be used to prove the corresponding Hausdorff dimension bound, this will be carried out in Section \ref{pigeon}.

\begin{proof}[Proof of Theorem \ref{dimb}  for (lower) Minkowski dimension ]
Let $d=\min\left(2-s^2,\frac{1}{s}\right)$. Using $\bigcup_{\te \in A} \Ga_\te(\de) \su E(\de)$ and $\Ga_\te(\de) \su R_\te$ for each $\te \in A$, we have 
\begin{align}
\label{eqc2}
\sum_{\te \in A} \leb(\Ga_\te(\de) ) & \leq \leb\left(\bigcup_{\te \in A} \Ga_\te(\de) \right)^{1/2} \cdot 
\left(\sum_{\te, \te' \in A} \leb(\Ga_{\te'}(\de)  \cap\Ga_\te(\de) )\right)^{1/2} \leq \\
& \leq \leb\left(E(\de)\right)^{1/2} \cdot \left(\sum_{\te, \te' \in A}  \leb(R_{\te'} \cap R_\te)\right)^{1/2}. \nonumber 
\end{align}
For the left hand side, it is easy to see that $\sum\limits_{\te \in A} \leb\left(\Ga_\te(\de)\right)  \gkb \frac{1}{\de} \cdot \de =1$, 
and by Proposition \ref{areas}, $\sum\limits_{\te,\te' \in A} \leb(R_\te \cap R_{\te'}) \lkb \de^{d-2}\cdot \log(\frac{1}{\de}).$ 
This yields $\leb\left(E(\de)\right) \gkb \frac{\de^{2-d}}{\log(1/\de)}$, therefore the Minkowski dimension of $E$ is at least $d$. 
\end{proof}

We now turn to the proof of  Proposition \ref{areas}.
\begin{proof}[Proof of Proposition \ref{areas}]

We have
$$
\sum\limits_{\te,\te' \in A} \leb(R_\te \cap R_{\te'})=
 \sum\limits_{\te,\te' \in A, |\te-\te'| \geq \de} \leb(R_\te \cap R_{\te'})+
\sum\limits_{\te' \in A} \leb(R_{\te'}).$$
For the second term, we get 
\begin{equation}
\label{trivi}
\sum\limits_{\te' \in A} \leb(R_{\te'}) \lkb \frac{1}{\de}\cdot \de = 1.
\end{equation}
Now we fix $\te'$, and for simplicity, we assume without loss of generality that $\te'=0$ and $R_{\te'}=R$.
So we need to give an upper bound for 
\begin{equation}
\label{eqbase}
 \sum\limits_{\te \in A, \te \geq \de} \leb(R_\te \cap R).
\end{equation}

\begin{remark}
\label{remeasy}
Using the observation above Lemma \ref{int}, we can give an easy upper bound for \eqref{eqbase}. Namely, we get that 
\begin{equation}
\label{eqeas}
\sum\limits_{\te \in A, \te  \geq \de} \leb(R_\te \cap R) \lkb  
\sum\limits_{\te \in A, \te  \geq \de} L(\de,\te) \cdot \frac{\de^2}{\te} \lkb \de^{1-s} \log\left(\frac{1}{\de}\right). 
\end{equation}
This yields 
$$\sum\limits_{\te,\te' \in A} \leb(R_\te \cap R_{\te'}) \lkb \de^{-s}\cdot \log\left(\frac{1}{\de}\right).$$
Using the same standard argument as in the proof of Theorem \ref{dimb}, it follows that the Hausdorff dimension of $\Ga$-Besicovitch sets is at least $2-s$.  
\end{remark}

To improve this easy bound, we will improve the bound in \eqref{eqeas} to 
\begin{equation}
\label{eqbass}
\sum\limits_{\te \in A, \te \geq \de} \leb(R_\te \cap R) \lkb \max\left(\de^{1-s^2}, \de^{1/s-1}\right) \cdot \log\left(\frac{1}{\de}\right).
\end{equation}
In order to do this,  we will consider two substantially different cases: the small angle and the large angle cases. 
We say that 
\begin{equation}
\label{smallte}
\te \ \text{is} 
\begin{cases}
  \text{small if} \ &  \de \leq \te \leq \min\left(\de^s,\de^{1-s}\right)=:\be(\de)=:\be,  \\ 
 \text{large if} \ &  \be \leq \te, 
\end{cases}
\end{equation}

\begin{equation}
\label{velate}
\te \ \text{is} \ \text{very large if} \  \ga:=\ga(\de):=\max\left(\de^s,\de^{1-s}\right) \leq \te. 
\end{equation}
We have the following estimates for the number of intersecting pairs of rectangles. 
\begin{lemma}
\label{sangle}
If $\te$ is small, then 
$$L(\de,\te) \lkb  \frac{1}{\de^s} \cdot   \max\left(\frac{\de}{\te},\te^s\right).$$
\end{lemma}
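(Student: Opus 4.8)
The plan is to bound $L(\de,\te)$ by counting, for each rectangle $T_i$ of $R$, how many rectangles $T_{j,\te}$ of $R_\te$ it meets, and then summing. By Lemma \ref{simple1} this count is $\lkb 1$ per $T_i$, but we want a bound better than the trivial $\de^{-s}$, so instead I would sum \emph{over $j$}: fix a rectangle $T_{j,\te}$ of $R_\te$ and estimate $|\{i : T_i \cap T_{j,\te}\neq\emptyset\}|$. Since $T_{j,\te}$ is a $3\de\times 3\rde$ rectangle tilted by angle $\te$, its intersection with the vertical slab structure of $R$ is contained in a parallelogram of horizontal extent $\lkb \de + |\te|\rde = \de + |\te|\de^s$ (width plus the horizontal spread of the tilted long side) and vertical extent $\lkb \rde + |\te|\de = \rde$ (since $|\te|\le\be\le\de^{1-s}$ gives $|\te|\de\lkb\rde$). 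Now I count how many of the $b^n$ rectangles $T_i$ this region can hit, using the two structural facts about $R$: the $T_i$ live in disjoint horizontal strips of height $\rde$ stacked with vertical spacing $\rde$, and within each horizontal strip the $x$-coordinates of the level-$n$ Cantor intervals are spread across $[0,1]$ with $\de$-separation coming from the $a$-adic construction.

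The key structural input — and this is where the ``symmetry'' of the Cantor set is used — is a bound on how many level-$n$ intervals of $C_n$ fall inside a fixed interval of length $\ell$. Here is the dichotomy that produces the $\max$: the horizontal width of the relevant region is $\lkb \de + |\te|\de^s$, so either $|\te|\le\de^{1-s}$, in which case the width is $\lkb\de$ and the region meets $\lkb 1$ columns of $R$ hence $\lkb 1$ rectangles $T_i$ (this will give the $\de/|\te|$ type term after summing — actually one gets $\frac{1}{\de^s}\cdot\frac{\de}{|\te|}$ by a slightly more careful count weighting by the vertical overlap $\frac{\de}{|\te|}$ when $|\te|\ge\de^{1-s}$); or the width is dominated by $|\te|\de^s$, and then a width-$|\te|\de^s$ interval contains $\lkb (|\te|\de^s / \de)^s \cdot(\text{number of intervals in a unit-length piece})$... more precisely, a subinterval of $[0,1]$ of length $\ell\ge\de$ meets $\lkb \ell^s \de^{-s}$ of the $b^n$ intervals of $C_n$ — this is the self-similarity/regularity estimate $|C_n \cap [x,x+\ell]|\lkb (\ell/\de)^s$ valid because $C$ is (nearly) self-similar with dimension $s$. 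Plugging $\ell \approx |\te|\de^s$ gives $\lkb |\te|^s \de^{s^2-s} \cdot \de^{-s}\cdot\de^s = |\te|^s\de^{-s}$ after combining with the number of vertical strips $\lkb 1$ met; summing the $\lkb|\te|$-type vertical count appropriately yields the $\frac{1}{\de^s}|\te|^s$ term. Taking the larger of the two regimes gives $L(\de,\te)\lkb \frac{1}{\de^s}\max\left(\frac{\de}{|\te|},|\te|^s\right)$.

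Concretely the steps are: (1) fix $T_{j,\te}$ and enclose $T_i\cap T_{j,\te}$ (over the relevant $i$) inside an axis-parallel box of dimensions $\lkb(\de+|\te|\de^s)\times\rde$; (2) observe this box meets $\lkb 1$ of the horizontal strips of $R$; (3) within that strip, invoke the regularity estimate $|\{k\le b^n : I_k^{(n)}\cap[x,x+\ell]\neq\emptyset\}|\lkb \max(1,(\ell\de^{-1})^s)$ for the $n$-th level Cantor intervals, with $\ell\approx \de+|\te|\de^s$; (4) this gives $|\{i:T_i\cap T_{j,\te}\neq\emptyset\}|\lkb \max(1, |\te|^s)$ — but that is per $j$, and there are $\approx\de^{-s}$ choices of $j$, which overcounts; (5) to get the sharper $\frac{\de}{|\te|}$ in the small-$|\te|$ regime, instead sum over $j$ \emph{weighted by the actual number of $i$'s}, or equivalently run the count the other way using Lemma \ref{int}'s area bound $\leb(T_i\cap T_{j,\te})\gkb$ (a suitable lower bound) together with $\leb(R_\te\cap R)$; (6) assemble and take the max over the two cases.

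The main obstacle I expect is step (3): establishing the clean regularity bound $|C_n\cap[x,x+\ell]|\lkb(\ell/\de)^s$ uniformly, since the digit sets $J_{(x_1\dots x_n)}$ vary from node to node and $C$ is only \emph{nearly} self-similar. One must show that the varying digit sets do not destroy Ahlfors-type regularity at scale $\de$; this is presumably where a hypothesis on the construction (the one the remark after Theorem \ref{dimb} calls ``some structure''/``certain symmetries'') is invoked. A secondary subtlety is bookkeeping the constant factors so that the bound is genuinely uniform in $\de$ and $\te$, and making sure the two regimes in the $\max$ glue together at $|\te|\approx\de^{1-s}$, i.e. that $\frac{\de}{|\te|}\approx|\te|^s$ there (indeed $\de/\de^{1-s}=\de^s=(\de^{1-s})^s$ when $s$... one checks $\de^s$ vs $\de^{(1-s)s}$, so the crossover is actually at $|\te|\approx\de^{1/(1+s)}$ — the $\max$ in the statement is the honest pointwise maximum and no gluing identity is needed).
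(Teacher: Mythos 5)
There is a genuine gap: your counting scheme cannot beat the trivial bound $L(\de,\te)\lkb \de^{-s}$. You fix a rectangle $T_{j,\te}$ and count the $T_i$'s it meets via an axis-parallel bounding box of width $\lkb \de+|\te|\rde$; but in the small-angle regime $|\te|\le\de^{1-s}$ this width is $\lkb\de$, so the per-$j$ count is $O(1)$ --- which is already guaranteed by (the transpose of) Lemma \ref{simple1} --- and summing over the $b^n\approx\de^{-s}$ choices of $j$ returns exactly $\de^{-s}$, not $\frac{1}{\de^s}\max\left(\frac{\de}{|\te|},|\te|^s\right)$, which is strictly smaller for most small $\te$. (Your arithmetic in the ``width dominated by $|\te|\de^s$'' branch also does not close: $(\ell/\de)^s$ with $\ell\approx|\te|\de^s$ is $|\te|^s\de^{-s(1-s)}$, which after multiplying by $\de^{-s}$ overshoots the target by $\de^{-s(1-s)}$.) The real content of the lemma is not that each $T_{j,\te}$ meets few $T_i$'s, but that \emph{most rectangles of $R$ miss $R_\te$ entirely}, and no local count per rectangle plus Cantor regularity can see that. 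Your step (5), which proposes to recover the $\de/|\te|$ term by ``running the count the other way using Lemma \ref{int} together with $\leb(R_\te\cap R)$'', is circular: Lemma \ref{int} gives only upper bounds on intersection areas, and the entire purpose of bounding $L(\de,\te)$ is to then bound $\leb(R_\te\cap R)$.

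The missing idea is a two-scale arithmetic rigidity argument. The paper picks $m$ with $|\te|\approx a^{-m}$ and splits each bottom-left corner as $x=x_{\lar}+x_{\sma}$ (digits up to level $m$ versus levels $m+1,\dots,n$), so that the coordinates of the two parts lie on lattices of spacing $a^{-m},b^{-m}$ and $\de,\rde$ respectively. Fixing the fine part $x_{\sma}=\xi$ (costing a factor $b^{n-m}\approx|\te|^s/\de^s$), the vertical intersection constraint pins $y_{\sma}$ down to $O(1)$ values. Then, for two intersecting pairs $(x,y)$, $(x',y')$ with the same fine parts, the horizontal constraint combined with the second-order expansion $|\prx(e^{i\te}z)-\prx z|\approx|\te\,\pry z|$ forces $|\te\,\pry(y-y')|$ into $O(1)$ intervals of length $O(\de)$; since $\pry(y-y')$ is an integer multiple of $b^{-m}$, this leaves at most $\max\left(\frac{\de}{|\te|^{1+s}},1\right)$ choices, and multiplying the two factors gives the claim. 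It is precisely the congruence structure of the corner coordinates modulo $a^{-m}$ and $b^{-m}$ --- the ``symmetry'' alluded to after Theorem \ref{dimb} --- that is exploited, not an Ahlfors-regularity bound on $C_n$; your proposal never engages with this mechanism.
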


\begin{lemma}
\label{langle}
If $\te$ is large, then 
$$L(\de,\te) \lkb 
\frac{1}{\de^s} \cdot \max\left(\frac{r_0}{\te},\te^s\right), 
$$
moreover, if $\te \geq \de^{1-s}$, then 
$$L(\de,\te) \lkb
  \frac{1}{\te^{s}} \cdot \frac{1}{\de^{s^2}} \cdot \max\left(\frac{r_0}{\te}, \te^s\right),$$ 
where $r_0=\max\left(\te^{1/s}, \te^{1/(1-s)} \right)$.
\end{lemma}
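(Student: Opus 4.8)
The statement to prove is Lemma \ref{langle}, the large-angle estimate on $L(\de,\te)$. Let me think about how to approach it.

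\textbf{Overview of the approach.} I would estimate $L(\de,\te)$ by counting, for a fixed rectangle $T_i$ of $R$, how many rectangles $T_{j,\te}$ of the rotated copy $R_\te$ it can meet, and then summing over $i$ (or, by Lemma \ref{simple1}, it suffices to count the $i$'s that are hit at all, since each is hit $\lesssim 10$ times — but here we want the two-sided count). The key geometric input is that the rectangles $T_{j,\te}$, being a rotated copy of the $n$-th approximation $R_n$ to a Cantor graph, are arranged along the rotated graph $\Ga_\te$, whose vertical extent is spread out at a controlled rate; and that $\Ga_\te$ itself, being close to a vertical segment of length $1$ turned by angle $\te$, stays within a $\lesssim |\te|$-neighborhood of a line segment of length $1$. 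So the rotated rectangles live in a thin parallelogram-like slab of width $\approx |\te|$ (plus the $\delta^s$ thickness of the graph), and we intersect it with $R$, which also lives in a slab of width $\approx 1$ but is vertically spread over the Cantor graph structure.

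\textbf{Key steps.} First I would fix $\te$ large and set $r_0 = \max(|\te|^{1/s}, |\te|^{1/(1-s)})$, the scale at which the width $|\te|$ of the rotated slab matches the relevant Cantor scaling (this is the scale where a single $r_0 \times r_0^s$ "Cantor box" has one side comparable to $|\te|$). The plan is: (1) Decompose $R$ into its ancestor rectangles at level $m$ where $a^{-m} \approx r_0$ (or $b^{-m} \approx |\te|$, matching the two cases in the definition of $r_0$); there are $\approx \de^{-s} r_0^{s} $ of the fine rectangles inside each such ancestor box, and $\approx r_0^{-s}$ ancestor boxes. (2) For a fixed ancestor box $Q$ of $R$ at this scale, control how many rotated fine rectangles $T_{j,\te}$ meet $Q$: because $\Ga_\te$ crosses the vertical extent of $Q$ (height $\approx r_0^s$) while its horizontal spread within that crossing is $\lesssim |\te| \cdot r_0^s / (\text{something})$... — more carefully, the rotated graph, restricted to where it meets $Q$, is contained in a box of dimensions roughly $|\te| \times r_0^s$ rotated back, i.e. it picks out an arc of $\Ga_\te$ of length $\approx r_0^s$, which corresponds to $\approx \de^{-s} (r_0^s)^{?}$ fine rectangles — here is where the self-similar structure of the Cantor graph enters, and where the two regimes $r_0/|\te|$ vs. $|\te|^s$ in the $\max$ come from (depending on whether the horizontal or vertical direction is the binding constraint on the arc). (3) Sum the per-box count over the $\approx r_0^{-s}$ ancestor boxes actually hit by $\Ga_\te$ — but $\Ga_\te$, being nearly a unit segment, only meets $\approx r_0^{-s}$... no, it meets every vertical level, so it meets $\lesssim r_0^{-s}$ of them, giving the first bound $L(\de,\te) \lkb \de^{-s}\max(r_0/|\te|, |\te|^s)$. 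For the improved bound when $|\te| \geq \de^{1-s}$, I would instead observe that $R$ itself only occupies a $\de^s$-neighborhood of a Lipschitz graph over its $\de^{-s}$-many base intervals, so the rotated slab of width $|\te|$ can only meet $\approx \de^{-s} \cdot |\te|^{?}$... — the gain of the factor $\de^{-s^2}|\te|^{-s}$ over $\de^{-s}$ comes from replacing the crude count of ancestor boxes hit by a count that uses that the rotated thin slab of width $|\te|$ meets only a $|\te|$-fraction (in the appropriate dimensional sense, i.e. $|\te|^s$ in Cantor-counting) of the base structure, combined with the refined per-box count. I would organize this as two sub-lemmas or two displayed computations, one per bound.

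\textbf{Main obstacle.} The hard part will be step (2): precisely quantifying how a small arc of the rotated Cantor graph $\Ga_\te$, sitting inside a fixed ancestor rectangle $Q$ of $R$, decomposes into fine $\de \times \rde$ rectangles, and in particular getting the $\max\left(r_0/|\te|, |\te|^s\right)$ with the right $r_0$ in both the $s \le 1/2$ and $s > 1/2$ regimes (which is why $r_0$ is itself a max of two powers). This requires carefully tracking which coordinate — the "horizontal width $\approx |\te|$ of the tilted slab" or the "vertical height of $Q$" — is the effective constraint, and translating a Euclidean diameter bound into a count of Cantor-graph pieces via $s = \log b/\log a$; the bookkeeping with the exponents $1/s$, $1/(1-s)$, $s^2$ is delicate and must be done separately in the subcases $\de^s \le |\te|$ vs. $\de^{1-s} \le |\te| \le \de^s$ (the latter only nonempty when $s \le 1/2$). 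I would also need Lemma \ref{int} and Remark \ref{remareas} to know which of the two area bounds per intersecting pair is in force, though strictly for this lemma only the combinatorial count $L$ is needed.
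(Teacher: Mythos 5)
Your overall shape --- fix the critical scale $r_0=\max\left(|\te|^{1/s},|\te|^{1/(1-s)}\right)$, decompose into ancestor boxes at that scale, count per box and sum --- matches the paper's, but the proposal leaves exactly the load-bearing step unproved, and the route you sketch for it would not produce it. The paper's key observation is that $r_0$ is chosen precisely so that $|\te|\le\min\left(r^s,r^{1-s}\right)$ for the construction scale $r\approx r_0$; that is, $\te$ is \emph{small} at scale $r$ in the sense of \eqref{smallte}, so the already-proved small-angle Lemma \ref{sangle} applies at scale $r$ and gives $L(r,\te)\lkb r^{-s}\max\left(r/|\te|,|\te|^s\right)$. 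That is the sole source of the factor $\max\left(r_0/|\te|,|\te|^s\right)$, and it cannot come from the purely geometric ``thin slab of width $|\te|$'' picture you describe: Lemma \ref{sangle} rests on the arithmetic structure of the construction (the splitting $x=x_{\lar}+x_{\sma}$, the congruences \eqref{props1}--\eqref{props4}, and the second-difference argument bounding $M_\te$), not on a diameter count, and a slab argument alone only recovers the trivial bound. Your step (2), with its unresolved exponent ``$(r_0^s)^{?}$'', is exactly this estimate in disguise, with no mechanism offered to prove it. Once it is in hand, the first claimed bound is just multiplication by $r^s/\de^s$, the number of fine rectangles per coarse box, as you intend.

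For the improved bound when $|\te|\ge\de^{1-s}$, your heuristic (``the slab meets only a $|\te|^s$-fraction of the base structure'') does not identify the actual mechanism. The paper introduces a second intermediate scale $\rho\approx\rho_0=|\te|\cdot\de^s$, with $\de\le\rho\le r$, and proves (Lemma \ref{cl1} via Claim \ref{lip2}) that inside a fixed intersecting pair of $\rho$-level boxes there are only $O(a)$ intersecting pairs of $\de$-rectangles: if two intersection points inside such a pair were vertically separated by much more than $\rde$, the vector joining them would make angle less than $|\te|/2$ with both the vertical axis and its rotation by $\te$, which is impossible. This yields $L(\de,\te)\lkb L(\rho,\te)\lkb (r^s/\rho^s)\cdot L(r,\te)$, and the gain over the first bound is exactly $(\de/\rho)^s=\left(\de^{1-s}/|\te|\right)^s$, i.e.\ the replacement of $\de^{-s}$ by $|\te|^{-s}\de^{-s^2}$. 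Without this transversality argument at the scale $|\te|\cdot\de^s$ --- a scale your sketch never singles out --- the second estimate is out of reach.
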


 We start with the proof of Lemma \ref{sangle}. 

\begin{proof}[Proof of Lemma \ref{sangle}]
Fix $\te$ such that $\te \in A, \de \leq \te \leq \be$, and choose the unique $m \in \{0,\dots,n\}$ such that $\te \in (1/a^{m+1},1/a^m]$.
Recall that $R=\bigcup_{i=1}^{b^n} T_i, R_\te=\bigcup_{i=1}^{b^n} T_{i,\te}$. 
Fix $T_i$ and $T_{j,\te}$ such that they intersect. For simplicity, let $x$,$y$ denote the bottom left corners of $T_i$ and $T_j$, respectively, and let $y_\te$ be the bottom left corner of $T_{j,\te}$. We will use the following simple geometric lemma. 
\begin{lemma}
\label{simple2}
Let $\te \in (0,1]$ and  assume that $T_i \cap T_{j,\te} \neq \emptyset$. Then 
\begin{enumerate}
	\item $|\pry(x-y_\te)|\leq 10 \rde$.
	
	\item If $\te \leq \de^{1-s}$, then $|\prx(x-y_\te)|\leq 10 \de$.
\end{enumerate}

\end{lemma}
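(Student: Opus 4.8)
The plan is to prove both parts of Lemma~\ref{simple2} by projecting the intersection hypothesis onto the coordinate axes and exploiting that $x$ is a corner of the axis-parallel rectangle $T_i$ while $y_\te$ is a corner of $T_{j,\te}$, so that $\prx(x)$, $\pry(x)$ are endpoints of the intervals $\prx(T_i)$, $\pry(T_i)$ and $\prx(y_\te) \in \prx(T_{j,\te})$, $\pry(y_\te) \in \pry(T_{j,\te})$.

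For part (1), since $T_i \cap T_{j,\te} \neq \emptyset$ the intervals $\pry(T_i)$ and $\pry(T_{j,\te})$ must overlap. The first has length $3\rde$ and $\pry(x)$ is its lower endpoint; the second has length at most $\diam T_{j,\te} = \diam T_j = 3\sqrt{\de^2+\de^{2s}} \leq 3\sqrt{2}\,\rde$ (using $s<1$, $\de\leq 1$, so $\de \leq \rde$) and contains $\pry(y_\te)$. Hence $|\pry(x-y_\te)| \leq 3\rde + 3\sqrt 2\,\rde < 10\rde$. Note this part needs no restriction on $\te$, only the intersection hypothesis. For part (2), $\prx(T_i)$ has length $3\de$ with $\prx(x)$ as its left endpoint, and the width of $\prx(T_{j,\te})$ is at most $3\de|\cos\te| + 3\rde|\sin\te|$. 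The hypothesis $|\te| \leq \de^{1-s}\leq 1$ gives $|\sin\te| \leq |\te| \leq \de^{1-s}$, so $\rde|\sin\te| \leq \rde\cdot\de^{1-s} = \de$ and the width of $\prx(T_{j,\te})$ is at most $6\de$. Since $\prx(T_i)$ and $\prx(T_{j,\te})$ overlap and $\prx(y_\te)\in\prx(T_{j,\te})$, we conclude $|\prx(x-y_\te)| \leq 3\de + 6\de = 9\de < 10\de$.

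I do not expect any genuine obstacle here; this is a routine planar estimate and the constant $10$ is comfortably loose. The only point deserving attention is that the hypothesis $|\te|\leq\de^{1-s}$ in part (2) is precisely what forces the tilted contribution $\rde|\sin\te|$ of the tall side to be comparable to the short side $\de$ of the rectangle — without it, a thin tall rectangle rotated by a larger angle would cast a much wider horizontal shadow and the bound $|\prx(x-y_\te)|\lkb\de$ would genuinely fail, which is why the two cases of \eqref{smallte} are treated separately in what follows.
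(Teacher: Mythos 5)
Your proof is correct and follows essentially the same route as the paper: project the intersecting rectangles onto the coordinate axes, bound the width of the projection of the tilted rectangle, and use that $x$, $y_\te$ are vertices so their projections lie in the projected intervals. The only cosmetic difference is that for part (1) you bound $\la(\pry(T_{j,\te}))$ by $\diam T_j \le 3\sqrt{2}\,\rde$ rather than by the projection formula $3\rde|\cos\te|+3\de|\sin\te|\le 6\rde$ as the paper does; both give the constant $10$ comfortably.
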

\begin{proof}[Proof of Lemma  \ref{simple2}]
Clearly, 
$|\pry(x-y_\te)| \leq \la(\pry(T_i))+\la(\pry(T_{j,\te}))=3\rde+\la(\pry(T_{j,\te}))$. 
Similarly, 
$|\prx(x-y_\te)| \leq \la(\prx(T_i))+\la(\prx(T_{j,\te}))=3\de+\la(\prx(T_{j,\te}))$. 
By simple planar geometry, we have that 
$\la(\pry(T_{j,\te})) \leq 3\rde|\cos{\te}|+3\de|\sin{\te}| \leq 6\rde$. 
Similarly, if $\te \leq \de^{1-s}$, then 
$\la(\prx(T_{j,\te})) \leq 3\rde|\sin{\te}|+3\de|\cos{\te}| \leq 3\rde \te+3\de \leq 6\de$ and the claim of Lemma \ref{simple2} follows. 

\end{proof}

By Lemma \ref{simple2}, since $\te$ is small, see \eqref{smallte}, we have 
\begin{equation}
|\prx(x-y_\te)|\leq 10 \de, \ |\pry(x-y_\te)|\leq 10 \rde. 
\label{eqs2}
\end{equation}
For $x=\left(\sum\limits_{j=1}^n \frac{x_j}{a^j}-\frac{1}{a^n}, \sum\limits_{j=1}^n \frac{\si_{(x_1\dots x_{j-1})}(x_j)}{b^j}-\frac{1}{b^n}\right)$, we write 
$x=x_{\lar}+x_{\sma}$, where 
\begin{align}
&  x_{\lar}=\left(\sum_{j=1}^m \frac{x_j}{a^j}, \sum_{j=1}^m \frac{\si_{(x_1\dots x_{j-1})}(x_j)}{b^j}\right), \\ 
& x_{\sma}=\left(\sum_{j=m+1}^n \frac{x_j}{a^j}-\frac{1}{a^n}, \sum_{j=m+1}^n \frac{\si_{(x_1\dots x_{j-1})}(x_j)}{b^j}-\frac{1}{b^n}\right). \nonumber
\label{eqs3}
\end{align}
Similarly, write $y=y_{\lar}+y_{\sma}$. 
Note that by construction, 

\begin{equation}
\label{props1}
\pry (x_{\lar}),  \pry (y_{\lar}) \ \text{are integer multiples of} \ 1/b^m, 
\end{equation}

\begin{equation}
\label{props2}
\pry (x_{\sma}),  \pry (y_{\sma}) \ \text{are integer multiples of} \ 1/b^n=\rde,
\end{equation}

\begin{equation}
\label{props3}
\prx (x_{\lar}),  \prx (y_{\lar}) \ \text{are integer multiples of}\ 1/a^m,
\end{equation}

\begin{equation}
\label{props4}
\prx (x_{\sma}),  \prx (y_{\sma}) \ \text{are integer multiples of}\ 1/a^n=\de.
\end{equation}

Now we fix $\xi \in \left\{x_{\sma}: x_{m+1},\dots,x_n \in J\right\}$, we have $b^{n-m}$ choices for $\xi$. 
Let $M_\te$ denote the number of pairs $(x,y)$ 
such that the corresponding rectangles $T_i$ and $T_{j,\te}$ intersect and such that $x_{\sma}=\xi$. Clearly, 
\begin{equation}
\label{lbound}
L(\de,\te) \lkb b^{n-m} \cdot M_\te \lkb \frac{\te^s}{\de^s} \cdot M_\te. 
\end{equation}

\begin{claim}
\label{mtheta}
We have 
$$M_\te \lkb \max \left(\frac{\de}{\te^{1+s}},1 \right).$$
\end{claim}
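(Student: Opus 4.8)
## Proof Plan for Claim \ref{mtheta}

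The plan is to count the pairs $(x,y)$ with $x_{\sma} = \xi$ fixed and $T_i \cap T_{j,\te} \neq \emptyset$ by separately counting the possibilities for the ``large parts'' and the ``small parts'' of $x$ and $y$. First I would observe that once $\xi = x_{\sma}$ is fixed, the intersection constraint \eqref{eqs2} forces both $|\prx(x - y_\te)| \leq 10\de$ and $|\pry(x-y_\te)| \leq 10\rde$; combining this with the rotation relation $y_\te = e^{i\te} y + \om_\te$ (for the appropriate translation $\om_\te$) and the fact that $|\te|$ is small, the displacement $y_\te - y$ is controlled: its $x$-coordinate is $O(\rde|\te|)$ in size plus a term of size $O(|\te|)$ times the $y$-coordinate of $y$, and similarly for the $y$-coordinate. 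Since all of our rectangles live in the unit square, $y$ has coordinates $O(1)$, so the key point is that $|\pry(y_\te - y)| \lkb |\te|$ and $|\prx(y_\te - y)| \lkb |\te| \cdot (\text{something} \lkb 1)$, hence $|x - y| \lkb |\te|$ in the relevant coordinates — but one has to be a little careful to get the right powers, using $|\te| \leq \be \leq \de^{1-s}$.

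Next I would exploit the arithmetic structure recorded in \eqref{props1}--\eqref{props4}. The idea: since $x_{\sma} = \xi$ is fixed, the freedom in $x$ is entirely in $x_{\lar}$, whose coordinates are integer multiples of $1/a^m$ (horizontally) and $1/b^m$ (vertically), with $1/a^m \approx |\te|$. The constraint $|\prx(x - y_\te)| \leq 10\de$ together with $|\te| \geq \de$ then pins down $\prx(x_{\lar})$ to within $O(\de/|\te| \cdot (1/a^m)^{-1}) = O(1)$... more precisely, I would argue that the pair $(\prx(x_{\lar}), \prx(y_{\lar}))$ has only $\lkb \max(\de/|\te|, 1) \cdot (\text{number of choices for one of them})$ possibilities once we use that $\prx(x_{\lar} - y_{\lar})$ is confined to an interval of length $O(\de) + O(|\te|\rde)$ and is a multiple of $1/a^m \approx |\te|$. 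The vertical constraint $|\pry(x - y_\te)| \leq 10\rde$ is even tighter: since $\pry(x_{\lar}), \pry(y_{\lar})$ are multiples of $1/b^m$ and $1/b^m \gg \rde$ (as $m \leq n$), generically $\pry(x_{\lar}) = \pry(y_{\lar})$, so the vertical large parts are forced to agree, contributing no extra multiplicity. Counting: there are $\lkb 1$ effective choices for the vertical large parts (they must match, then the small vertical parts of $y$ are also essentially determined up to $O(1)$ by \eqref{props2} and the $O(\rde)$ window), and the horizontal large parts contribute the $\max(\de/|\te|, 1)$ factor per choice of — here is the delicate bookkeeping — and the remaining factor $1/|\te|^s$ comes from the number of choices of $y_{\sma}$, equivalently $b^{n-m} \approx |\te|^{-s}\de^s \cdot \de^{-s}$...

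Let me restate the counting more cleanly. Write $M_\te$ as a sum over the $O(1)$ admissible vertical configurations; for each, the number of admissible $(\prx(x_{\lar}), \prx(y_{\lar}))$ pairs is $\lkb \max(\de/|\te|, 1)$ after fixing one coordinate, and the number of admissible $y_{\sma}$ (the small part of $y$, which is free) is $b^{n-m} \approx (\de/|\te|)^{-s} = |\te|^s/\de^s$. Hmm — that would give $M_\te \lkb \max(\de/|\te|, 1) \cdot |\te|^s \de^{-s}$, which is not the claimed bound. So the correct accounting must be that fixing $\xi = x_{\sma}$ already removes the $y_{\sma}$ freedom via the near-agreement of small parts forced by the tight vertical window together with the near-agreement of large vertical parts: once $\pry(x)$ is within $O(\rde)$ of $\pry(y_\te) \approx \pry(y)$ and $\pry(x_{\lar}) = \pry(y_{\lar})$, we get $\pry(y_{\sma})$ determined up to $O(1)$ multiples of $\rde$, and then the vertical combinatorial structure of the staircase forces $y_{\sma}$ into $O(1)$ possibilities. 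What is genuinely free is then only the horizontal large part, contributing $\max(\de/|\te|, 1)$, times a correction from the horizontal small parts of size... no. I expect the honest statement is: $M_\te \lkb \max(\de/|\te|^{1+s}, 1)$ arises as (number of horizontal large-part pairs, $\lkb \de/|\te|$ when $\de/|\te| \geq 1$... which never happens since $|\te| \geq \de$) — so in fact when $\de/|\te| \leq 1$ the $x$-constraint $|\prx(x_{\lar} - y_{\lar})| \leq 10\de < 1/a^m$ forces $\prx(x_{\lar}) = \prx(y_{\lar})$ as well! Then $M_\te \lkb 1$ unless the $O(|\te|\rde)$ slack in the horizontal window (coming from the tilt of $T_{j,\te}$ relative to the vertical extent $\rde$) reintroduces $\lkb |\te|\rde/(1/a^m) + 1 \approx |\te|^2\rde/... $ hmm, $\approx |\te| \cdot \rde \cdot a^m \approx \rde \approx \de^s$ choices, and then times $b^{n-m}$ for $y_{\sma}$...

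Given the uncertainty in the exact bookkeeping, the honest version of my plan is: \textbf{Step 1:} Fix $m$ with $|\te| \approx 1/a^m$, fix $\xi = x_{\sma}$. \textbf{Step 2:} Use Lemma \ref{simple2} and the rotation formula to show $x$ and $y_\te$ (hence $x$ and $y$, up to the translation) differ by $O(\de)$ horizontally and $O(\rde)$ vertically. \textbf{Step 3:} Use the multiplicative/arithmetic structure \eqref{props1}--\eqref{props4}: the vertical large parts must coincide (since their granularity $1/b^m$ exceeds the $O(\rde)$ window for $m < n$, with the $m = n$ case trivial), forcing $y_{\sma}$ into $O(1)$ choices given $x_{\sma} = \xi$; the horizontal large parts are confined to an interval of length $O(\de + |\te|\rde)$ with granularity $1/a^m \approx |\te|$, giving $\lkb \de/|\te| + \rde + 1 \lkb \max(\de/|\te|, \rde)$ choices, and $\rde = \de^s$ while $\de/|\te| \cdot (1/|\te|^s)$-type terms need to be balanced. \textbf{Step 4:} Multiply and simplify using $\de \leq |\te| \leq \be = \min(\de^s, \de^{1-s})$ to land on $\max(\de/|\te|^{1+s}, 1)$.

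\textbf{Main obstacle.} The crux — and the step I expect to be genuinely delicate — is Step 3: correctly counting the admissible pairs of horizontal large parts while accounting for the extra ``tilt slack'' of size $|\te|\rde$ in the horizontal window (this is exactly what Remark \ref{remareas} and the two bounds of Lemma \ref{int} are flagging), and correctly seeing that fixing $\xi = x_{\sma}$ collapses the vertical small-part freedom of $y$ down to $O(1)$ rather than $b^{n-m}$. This interplay between the two scales $1/a^m$ and $\de$, and between the horizontal and vertical granularities, is where the factor $|\te|^{1+s}$ (as opposed to $|\te|$ or $|\te|^s$) in the denominator comes from, and it is the only place the precise combinatorial structure of the Cantor graph — as opposed to a crude dimension/covering count — actually enters.
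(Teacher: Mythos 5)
Your setup matches the paper's: fix $m$ with $|\te|\approx 1/a^m$, fix $\xi=x_{\sma}$, use Lemma \ref{simple2}, and observe via \eqref{props1}--\eqref{props2} that the vertical window of width $O(\rde)$ pins $\pry(y_{\sma})$ — hence $y_{\sma}$ — to $O(1)$ values. But the crux, which you correctly flag as the obstacle and never resolve, is counting the admissible large parts, and here your accounting has a genuine gap. Two problems. First, the vertical large parts need not coincide: the translation $\om_\te$ is arbitrary, so the constraint only confines the \emph{difference} $\pry(x_{\lar})-\pry(y_{\lar})$ to $O(1)$ multiples of $1/b^m$ near $-\pry(\om_\te)$; and — this is the recurring error in your Step 3 — confining a difference to $O(1)$ values does not confine the pair: there remain $b^m\approx|\te|^{-s}$ choices of $\pry(y_{\lar})$, each determining $x_{\lar}$ up to $O(1)$. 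The same objection applies to your horizontal count: the $O(1)$ values available to the lattice-valued quantity $\prx(x_{\lar})-\prx(y_{\lar})$ do not by themselves bound the number of pairs. Your computations therefore oscillate between the trivial bound $M_\te\lkb b^m$ and an unjustified $M_\te\lkb 1$, neither of which is the claim.

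The missing mechanism is a second-difference argument exploiting that rotation shifts the horizontal coordinate by essentially $\te\,\pry(y)$. The paper takes two admissible pairs $(x,y)$, $(x',y')$ with the same small parts and applies part \eqref{s33} of Lemma \ref{simple3} to $y-y'$: since the small parts cancel, $\prx(x-x'-y+y')$ is an integer multiple of $1/a^m$ of absolute value $O(1/a^m)$, hence takes $O(1)$ values; and it equals $\pm\te\,\pry(y-y')$ up to an error $O(\de)$ (using $|\te|^2\leq\de$, which needs $\te$ small). Therefore $\te\,\pry(y-y')$ lies in $O(1)$ intervals of length $O(\de)$, so $\pry(y-y')$ lies in $O(1)$ intervals of length $O(\de/|\te|)$; being a multiple of $1/b^m\approx|\te|^s$, it takes $\lkb\max\left(\frac{\de b^m}{|\te|},1\right)=\max\left(\frac{\de}{|\te|^{1+s}},1\right)$ values, and since $\pry(y)$ determines $y$ (and each $y$ has $O(1)$ partners $x$ by Lemma \ref{simple1}), the claim follows. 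This interplay — the range $|\te|$ of the tilt term $\te\,\pry(y)$ being comparable to one horizontal lattice spacing $1/a^m$, against a window of width $\de$ and a vertical quantization $1/b^m$ — is exactly where $|\te|^{1+s}$ comes from, and it is absent from your proposal.
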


\begin{proof}[Proof of Claim \ref{mtheta}]
Suppose now that  $(x,y)$ satisfy \eqref{eqs2} and that $x_{\sma}=\xi$.
We will need the following simple geometric lemma: 
\begin{lemma}
\label{simple3}
For any $z \in \cc$, we have 
\begin{enumerate}
\item
\label{s31}
$|\prx(e^{i\te}z)-\prx z| \leq |\te z|$.

\item
\label{s32}
$|\pry(e^{i\te}z)-\pry z| \leq |\te z|$.

\item
\label{s33}
$||\prx(e^{i\te}z)-\prx z|-|\te \pry z|| \leq 2|\te^2 z|$.

\end{enumerate}
 
\end{lemma}

\begin{proof}
The simple proof is left to the reader. 

\end{proof}

By \eqref{eqs2}, we have 
$|\pry y_\te-\pry x| \leq 10 \rde$.
By \eqref{s32} of Lemma \ref{simple3} applied for $y$ and using that $|y| \leq 2$ (as $y \in R \su [-\de,1+\de] \times [-\rde,1+\rde])$ as well as $y_\te=e^{i\te}y+\om_{\te}$, we have 
$|\pry(e^{i\te}y)-\pry y|\leq 2\te$, and so 
$$|\pry y-\pry x+\pry(\om_{\te})|= |\pry y- \pry(e^{i\te}y)+\pry y_\te -\pry x|\leq 2 \te+10 \rde.$$
Using $\te \leq \be \leq \rde$, 
we get that 
$$|\pry y-\pry x+\pry(\om_{\te})| \leq 12\rde.$$
By \eqref{props1}, $\pry (y_{\sma})\equiv \pry y \pmod{1/b^m}$ and   
$\pry (x_{\sma})\equiv\pry x  \pmod{1/b^m}$. Therefore, since $x_{\sma}=\xi$, we have 
\begin{align}
\label{eqspi}
\pry (y_{\sma}) \in & \left(\pry x -\pry(\om_{\te}) + \left[-12 \rde,12 \rde\right] \pmod{1/b^m} \right)=  \nonumber \\
& \left(\pry \xi-\pry(\om_{\te}) + \left[-12 \rde, 12 \rde \right] \pmod{1/b^m}\right).
\end{align}
Using \eqref{props2} and the fact that $\pry (y_{\sma})\in [0,1/b^m)-1/b^n$, \eqref{eqspi} implies that 
$\pry (y_{\sma})$ can only take at most $25$ different values. 
Consequently, since by construction,  $\pry (y_{\sma})$ uniquely determines $y_{\sma}$, 
\begin{equation}
\label{eqs4}
y_{\sma} \ \text{can only take at most} \  25 \ \text{different values}. 
\end{equation}
Fix $z$ to be one of these $25$ different values. 

Suppose now that $(x,y)$, and also $(x',y')$ satisfy \eqref{eqs2} and that $x_{\sma}=\xi=x'_{\sma}$, $y_{\sma}=z=y'_{\sma}$. 
By \eqref{eqs2}, we have 
$|\prx(x-x'-y_\te+y'_\te)|\leq 20 \de$. We also have 
$$x-x'-y_\te+y'_\te=x-x'-e^{i\te}y+e^{i\te}y'=x-x'-e^{i\te}(y-y').$$ 
By \eqref{s33} of Lemma \ref{simple3} applied to $y-y'$, we get 
$$||\prx(e^{i\te}(y-y'))-\prx (y-y')|-|\te \pry (y-y')||\leq 2\te^2|y-y'|.$$
Since
$$
|\prx(x-x'-y+y')-\prx(x-x'-y_\te+y'_\te)|=|\prx(e^{i\te}(y-y'))-\prx (y-y')|,
 $$
we also get 
\begin{equation}
\label{eqs7}
||\prx(x-x'-y+y')-\prx(x-x'-y_\te+y'_\te)|-|\te \pry (y-y')||\leq 2\te^2|y-y'|.
\end{equation}
Since $|\prx(x-x'-y_\te+y'_\te)|\leq 20\de \leq 20/ a^m$, $\te|\pry  (y-y')| \leq \te \leq 1/ a^m$, and $2\te^2|y-y'|\leq 4\te \leq 4/ a^m$, we must have 
$|\prx(x-x'-y+y')| \leq   25/ a^m$.

Now we utilize 
$\prx(x-x'-y+y')=\prx(x-x'-y+y')_{\lar}+\prx(x-x'-y+y')_{\sma}.$
Since by definition, $y_{\sma}=y'_{\sma}$ and $x_{\sma}=x'_{\sma}$, we have 
$$\prx(x-x'-y+y')_{\sma}=0.$$ 
Therefore, 
$$|\prx(x-x'-y+y')_{\lar}|=|\prx(x-x'-y+y')| \leq   25/ a^m.$$
Finally, since by \eqref{props3},  $\prx(x-x'-y+y')_{\lar}$ is an integer multiple of $1/a^m$, we have that

\begin{equation}
\label{eqs6}
\prx(x-x'-y+y') \ \text{can only take at most} \  51 \ \text{different values}.
\end{equation}
Now, using \eqref{eqs7}, $|\prx(x-x'-y_\te+y'_\te)| \leq 20\de$, $\te^2 \leq \de^s \de^{1-s}= \de$, and $|y-y'| \leq 2$, we get that 
\begin{align*}
|\te \pry (y-y')| & \leq |\prx(x-x'-y+y')|+|\prx(x-x'-y_\te+y'_\te)|+ 2\te^2|y-y'|) \leq \\ 
& \leq |\prx(x-x'-y+y')| + 24 \de.
\end{align*}
Similarly, 
\begin{align*}
|\te \pry (y-y')| & \geq |\prx(x-x'-y+y')|-|\prx(x-x'-y_\te+y'_\te)|- 2\te^2|y-y'| \geq \\
& \geq |\prx(x-x'-y+y')|-24\de.
\end{align*}
Combining these with \eqref{eqs6}, we obtain that $|\te \pry (y-y')|$ is contained in the union of at most $51$ intervals of length $48\de$.
By construction, since $y_{\sma}=y'_{\sma}$, 
$\pry (y-y')=\frac{l}{b^m}$ for some integer $l$. We get that $l$ is contained in the union of at most $51$ intervals of length  
$\frac{48 \de \cdot b^m}{ \te} \lkb \frac{\de}{\te^{s+1}}$.
Since $l$ is an integer, this means that the number of possible values of $\pry (y-y')$ is $\lkb \max\left(\frac{\de}{\te^{s+1}},1\right)$. 
Since $\pry (y)$ uniquely determines $y$, this means that the number of $y$ such that $y_{\sma}=z$ and \eqref{eqs2} holds for $y$ and some $x$, is 
$\lkb \max\left(\frac{\de}{\te^{s+1}},1\right)$. Taking all $25$ values of $z$ and using Lemma \ref{simple1}, by the definition of $M_\te$ we get 
$$
M_\te \lkb \max \left(\frac{\de}{\te^{s+1}},1\right)
$$
and the proof of Claim \ref{mtheta} concludes. 
\end{proof}

Using \eqref{lbound} and Claim \ref{mtheta}, 
we get that 
\begin{equation}
\label{eqind}
L(\de,\te) \lkb \frac{\te^s}{\de^s}  \cdot M_\te \lkb  \frac{\te^s}{\de^s}  \cdot  \max \left(\frac{\de}{\te^{s+1}},1\right) =\frac{1}{\de^s} \cdot   \max\left(\frac{\de}{\te},\te^s\right),
\end{equation}
and the proof of Lemma \ref{sangle} concludes.

\end{proof}

\begin{proof}[Proof of Lemma \ref{langle}]

Fix $\te$ such that $\te \in A, \be < \te \leq 1$.
For clarity, we will consider the $s \leq 1/2$ and $s > 1/2$ cases separately. 
Note that 
\begin{equation}
\label{bedef}
\be=\begin{cases}
\de^s \ & \text{when} \ s \geq 1/2 \\ 
\de^{1-s} \ & \text{when} \ s \leq 1/2. 
\end{cases}
\end{equation}
Let 
\begin{equation}
\label{r0def}
r_0=\begin{cases}
\te^{1/s} \ & \text{if} \ s \geq 1/2 \\ 
\te^{1/(1-s)} \ & \text{if} \ s \leq 1/2. 
\end{cases}
\end{equation}
Choose the unique $t \in \nn$ such that 
$r_0 \in (1/a^{t+1},1/a^t]$, and let $r=1/a^t$. We have 
\begin{equation}
\label{eqs93}
\te = \min\left(r_0^s,r_0^{1-s}\right)\leq \min\left(r^s,r^{1-s}\right)=:\be(r). 
\end{equation}
The main idea is the following: 
We will apply the previous "small angle" case for $r$ in place of $\de$, that is, for the approximation of $\Ga$ consisting of rectangles of size $3r \times 3r^s$ (note that $r$ is larger than $\de$). Moreover, we will show that inside each intersecting pair of rectangles of size $3r \times 3r^s$, there can not be too many intersecting rectangles of size $3\de \times 3\rde$. 

First, using Lemma \ref{sangle} for $r$ in place of $\de$ which is valid by \eqref{eqs93}, we obtain that 
\begin{equation}
\label{eqlarge1}
L(r,\te) \lkb \frac{1}{r^s} \cdot \max\left(\frac{r}{\te}, \te^s\right).
\end{equation}
Second, using that each $3r \times 3r^s$ rectangle contains $r^s/\de^{s}$ many $3\de \times 3\rde$ rectangles in our construction, we trivially have that 
\begin{equation}
\label{trivisc}
L(\de,\te) \lkb \frac{r^s}{\de^s}\cdot L(r,\te) \lkb  \frac{1}{\de^s}\cdot \max\left(\frac{r}{\te},\te^s\right) \lkb 
\frac{1}{\de^s}\cdot \max\left(\frac{r_0}{\te},\te^s\right), 
\end{equation}
which is the first claim of Lemma \ref{langle}. 
We will use this estimate when $s > 1/2$ and $\te$ is large but not very large, see \eqref{smallte} and \eqref{velate}. 

Assume now that $\te \geq \de^{1-s}$. This holds when $s \leq 1/2$ and $\te$ is large, and when $s > 1/2$ and $\te$ is very large. In this case, we will use the following lemma. 
\begin{lemma}
\label{cl1}
Let $\te \geq \de^{1-s}$. Then  
$L(\de,\te) \lkb \frac{r^s}{(\te \rde)^s} \cdot L(r,\te).$ 
\end{lemma}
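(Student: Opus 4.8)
# Proof Plan for Lemma \ref{cl1}

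The plan is to refine the counting argument of Lemma \ref{sangle} by tracking, within each intersecting pair of coarse $3r \times 3r^s$ rectangles, how many fine $3\de \times 3\rde$ rectangle pairs can actually intersect. The trivial bound $L(\de,\te) \lkb \frac{r^s}{\de^s} L(r,\te)$ from \eqref{trivisc} simply multiplies by the number $r^s/\de^s$ of fine rectangles inside one coarse rectangle; the improvement comes from observing that when $|\te| \geq \de^{1-s}$, the rotated fine strip $T_{j,\te}$ is wide (its $x$-projection has length $\approx \rde|\te| \geq \de$ rather than $\de$), so a fixed coarse rectangle of $R$ can only meet $\approx \frac{r^s}{(|\te|\rde)^s}$ worth of fine rotated rectangles from $R_\te$ in the relevant sense, and symmetrically.

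First I would fix a coarse intersecting pair, i.e. rectangles $T^{\lar}_I$ of size $3r \times 3r^s$ from the level-$t$ approximation $R_t$ and $T^{\lar}_{J,\te}$ of size $3r \times 3r^s$ from $R_{t,\te}$ with $T^{\lar}_I \cap T^{\lar}_{J,\te} \neq \emptyset$. Then I would count pairs $(T_i, T_{j,\te})$ of fine rectangles with $T_i \su T^{\lar}_I$ (up to the $3\times$ fattening), $T_{j,\te} \su T^{\lar}_{J,\te}$, and $T_i \cap T_{j,\te} \neq \emptyset$. Using the splitting $x = x_{\lar} + x_{\sma}$ and $y = y_{\lar} + y_{\sma}$ as in the proof of Lemma \ref{sangle} — but now with the level-$t$/level-$n$ split rather than level-$m$/level-$n$ — the coarse data $x_{\lar}, y_{\lar}$ are fixed by the choice of coarse pair, so I only need to bound the number of admissible $(x_{\sma}, y_{\sma})$. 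The vertical constraint from $T_i \cap T_{j,\te} \neq \emptyset$ together with \eqref{props2} pins $\pry(y_{\sma})$ to $O(1)$ values given $\pry(x_{\sma})$, exactly as before via Lemma \ref{simple3}(2) and $|\te| \lesssim r^s$ from \eqref{eqs93}. So $y_{\sma}$ is determined up to $O(1)$ by $x_{\sma}$, and the count reduces to: how many values can $x_{\sma}$ take? There are $r^s/\de^s$ a priori choices, but the horizontal intersection constraint, now that $|\te| \geq \de^{1-s}$ makes $\la(\prx(T_{j,\te})) \approx \rde|\te|$, restricts $\prx(x_{\sma})$ (an integer multiple of $\de$ by \eqref{props4}, lying in an interval of length $\approx r$) to an interval of length $\approx \rde|\te|$, giving $\approx \rde|\te|/\de$ choices — hence the number of admissible fine pairs inside one coarse pair is $\lesssim \frac{r^s}{\de^s} \cdot \frac{\de}{\rde|\te|} \cdot (\text{a compensating factor})$. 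I would organize the bookkeeping so that this works out to the factor $\frac{r^s}{(|\te|\rde)^s}$ claimed, summing over the $L(r,\te)$ coarse pairs.

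The main obstacle I anticipate is getting the exponents exactly right in the "compensating factor" — i.e. verifying that the number of admissible fine pairs per coarse pair is really $\frac{r^s}{(|\te|\rde)^s}$ and not, say, $\frac{r^s}{\de^s}\cdot\frac{\de}{|\te|\rde}$, which would be a weaker (or different) bound. The resolution should come from the self-similar structure: the $r^s/\de^s$ fine rectangles inside one coarse rectangle are themselves a scaled copy of the whole construction at scale $\de/r$, and the horizontal-width gain $\rde|\te|$ should be converted into a gain in the number of admissible \emph{columns} of fine rectangles, each column carrying $\approx (\de/r)^{-s}\cdot(\dots)$ — so one is really applying the same $b^{n-m}$-type counting ($b^{\text{levels}} \approx (\text{width})^s$) one more time at the fine scale. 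Concretely I expect to choose a parameter $p \in \nn$ with $\de \cdot b^{?} \approx \rde|\te|$ — more precisely the level at which the fine-rectangle columns have horizontal extent $\approx \rde|\te|$ — fix the ``coarser fine'' data, and count as in Claim \ref{mtheta}, so that the per-coarse-pair count becomes $\lesssim \frac{r^s}{(\rde|\te|)^s}$ after accounting for the $O(1)$ freedom in the remaining digits. Once that per-pair bound is in hand, multiplying by $L(r,\te)$ yields the lemma immediately.
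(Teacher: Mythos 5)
Your proposal has the right skeleton: you correctly identify that the gain over the trivial bound \eqref{trivisc} must come from an intermediate horizontal scale $\approx|\te|\rde$, and that the target is a per-coarse-pair count of $r^s/(|\te|\rde)^s$ fine pairs --- which is exactly how the paper proceeds (it sets $\rho_0=|\te|\rde$, $\rho\approx\rho_0$, and shows $L(\de,\te)\lkb L(\rho,\te)\lkb (r^s/\rho^s)\,L(r,\te)$). But the core of your argument has a genuine gap. First, the claim that the vertical constraint pins $\pry(y_{\sma})$ to $O(1)$ values ``exactly as before'' fails: the congruence argument of Claim \ref{mtheta} localizes $\pry y-\pry x+\pry(\om_\te)$ only to within $O(|\te|)+O(\rde)$, and in the small-angle case this is $O(\rde)$ precisely because $|\te|\leq\be\leq\rde$ there. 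In the present regime $|\te|$ may be vastly larger than $\rde$, and with the level-$t$ split the localization is to an interval of length $\approx r^s=1/b^t$, i.e.\ the constraint is vacuous modulo $1/b^t$. (This particular step can be repaired by invoking Lemma \ref{simple1} instead, but it already signals that the Claim \ref{mtheta} machinery does not transfer to large angles.)

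Second, and more importantly, the step you defer to the end --- that after fixing the digits of $x$ down to scale $\approx|\te|\rde$ only $O(1)$ intersecting fine pairs remain --- is the entire content of the lemma, and ``count as in Claim \ref{mtheta}'' does not supply it. The paper proves this (Lemma \ref{lip1} via Claim \ref{lip2}) by a transversality argument of a different nature: if $p\in T_i\cap T_{j,\te}$ and $q\in T_{i'}\cap T_{j',\te}$ lie in the same pair of intersecting $3\rho\times3\rho^s$ rectangles with both $|i-i'|$ and $|j-j'|$ large, then $v=p-q$ satisfies $|\prx v|\lkb\rho\approx|\te|\rde$ while $|\pry v|\gkb a\rde$, so $v$ makes angle $<|\te|/2$ with the vertical axis and, by the symmetric computation in the rotated frame, also with the rotated vertical axis --- contradicting that these two axes differ by the angle $\te$. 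Nothing in your congruence/difference-set bookkeeping plays this role. Moreover, your interim count of ``$\rde|\te|/\de$ choices for $\prx(x_{\sma})$'' treats the admissible columns as consecutive multiples of $\de$ rather than as a Cantor set (the correct count of level-$n$ construction intervals in an interval of length $\rho_0$ is $\lkb(\rho_0/\de)^s$), which is exactly the unresolved ``compensating factor'' you flag. Until the $O(1)$-per-intermediate-pair claim is actually proved, the proposal does not establish the lemma.
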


\begin{proof}[Proof of Lemma \ref{cl1}]
We introduce another scale. Let  $\rho_0=\te \cdot \de^s$, choose the unique $k \in \nn$ such that $\rho_0 \in  (1/a^{k+1},1/a^k]$, and let $\rho=1/a^k$. 
One can easily check that $\de \leq \rho_0 \leq r_0$, and then $\de \leq \rho \leq r$. 
Recall that $T_i, T_{j,\te}$ denote rectangles of  size $3\de \times 3\rde$ in $R$, $R_\te$, respectively. 

\begin{lemma}
\label{lip1}
Let $S$ be a rectangle of $R_k$ and let $S'_\te$ be a rectangle of $R_{k,\te}$ such that $S \cap S'_\te \neq \emptyset$. 
Then $$|\left\{(i,j): T_i \su S,T_{j,\te} \su S'_\te, T_i \cap T_{j,\te} \neq \emptyset\right\}| \leq 220a.$$

\end{lemma}

\begin{proof}[Proof of Lemma \ref{lip1}]

We will start by showing the following claim. 
\begin{claim}
\label{lip2}
Assume that $i, i', j, j'$ such that $T_i \cap T_{j,\te} \neq \emptyset, T_{i'} \cap T_{j',\te} \neq \emptyset$, $T_i,T_{i'} \su S, T_{j,\te}, T_{j',\te} \su S'_\te$. Then $|i'-i| \leq 10a$ or $|j'-j| \leq 10a$. 
\end{claim}

\begin{proof}[Proof of Claim \ref{lip2}]
Let $v \in \rr^2$ be a vector such that $|\prx v | \leq 3\rho$, $|\pry v | \geq 8a \rde$. 
Let $\al$ denote the angle of $v$ and the vertical axis. We show that $\al < \te/2$: 
\begin{equation}
\label{eqs10}
\al \leq  \tan \al = \frac{|\prx v |}{|\pry v |}\leq \frac{3\rho}{8a\rde}\leq \frac{3a\rho_0}{8a\rde}=
\frac{3\te \cdot \de^s}{8\rde}< \frac{\te}{2}.
\end{equation}
Assume now for contradiction that  $|i'-i| > 10a$ and $|j'-j| > 10a$, $p \in T_i \cap T_{j,\te}$, $q \in T_{i'} \cap T_{j',\te}$, and let $v=p-q$. 
By construction, 
$|\pry v | \geq  (10a+1) \rde- 3 \rde \geq 8a \rde$, and since $p,q \in S$, we have $|\prx v | \leq 3\rho$. 
So by \eqref{eqs10}, $\al < \frac{\te}{2}$. 

Now let $\ell$ denote the vertical axis, $\ell_\te=e^{i\te}\ell$, let $\al_\te$ denote the angle of $v$ and $\ell_\te$, and let $\proj_{\ell_\te}, \proj_{\ell_{\te^{\perp}}}$ denote the orthogonal projection onto $\ell_\te$ and $\ell_{\te^{\perp}}$, respectively. 
By construction, similarly above, $|\proj_{\ell_\te}(v)| \geq 8a \rde$. Moreover, since $p,q \in S'_\te$, $|\proj_{\ell_{\te^{\perp}}}(v)| \leq 3 \rho$. 
Similarly as in \eqref{eqs10} above, we obtain that $\al_\te < \frac{\te}{2}$. 
But clearly, $\te=\al + \al_\te<\te/2+\te/2$, so we get a contradiction and the statement of Claim \ref{lip2} follows. 

\end{proof}
Let $i, i', j, j'$ as in the statement of Claim \ref{lip2}. 
Then  without loss of generality, $|i'-i| \leq 10a$. 
Therefore, the number of $T_i$'s contained in $S$ that intersect some $T_{j,\te} \su S'_\te$ is at most $22a$. 
By Lemma \ref{simple1}, we know that for each $T_i$ there are at most 10 $T_{j,\te}$ such that $ T_i \cap T_{j,\te} \neq \emptyset$. 
So we get that 
$$|\left\{(i,j): T_i \su S, T_{j,\te} \su S'_\te, T_i \cap T_{j,\te} \neq \emptyset\right\}| \leq 10 \cdot 22a=220a$$ 
and the proof of Lemma \ref{lip1} concludes.
\end{proof}

We are ready to finish the proof of Lemma \ref{cl1}. By Lemma \ref{lip1}, $L(\de,\te) \lkb  L(\rho,\te)$. Since the number of $3\rho \times 3\rho^s$-rectangles in our construction inside a fixed rectangle of size $3r \times 3r^s$ is $\lkb r^s/\rho^s$, clearly, we have 
$L(\rho,\te)  \lkb r^s/\rho^s  \cdot L(r,\te) \leq r^s/(\te \rde)^s  \cdot L(r,\te)$. Consequently, 
$L(\de,\te)  \lkb \frac{r^s}{(\te \rde)^s}  \cdot L(r,\te)$ and the proof of Lemma \ref{cl1} concludes. 

\end{proof}

Combining \eqref{eqlarge1} and Lemma \ref{cl1}, we obtain that 
\begin{equation}
\label{eqs94}
L(\de,\te) \lkb \frac{r^s}{(\te \rde)^s}  \cdot \frac{1}{r^s} \cdot \max\left(\frac{r}{\te}, \te^s\right)
\lkb  \frac{1}{\te^{s}} \cdot \frac{1}{\de^{s^2}} \cdot \max\left(\frac{r_0}{\te}, \te^s\right),
\end{equation}
which is the second statement in Lemma \ref{langle}. 

\end{proof}
 
Using the upper bounds for $L(\de,\te)$ obtained in Lemma \ref{sangle} and \ref{langle}, we now give an upper bound for $\leb(R_\te \cap R)$ and eventually for 
$\sum\limits_{\te \in A, \te  \geq \de} \leb(R_\te \cap R)$. 

Let $\te$ be small. Then by Lemma \ref{int} and Lemma \ref{sangle}, we have 
\begin{equation}
\label{eqsm1}
\leb(R_\te \cap R) \lkb \frac{1}{\de^s} \cdot   \max\left(\frac{\de}{\te},\te^s\right) \cdot \de^{s+1}=
 \de \cdot \max \left(\frac{\de}{\te},\te^s \right).
\end{equation}
Summing up for small angles $\te$, we obtain 
\begin{align*}
 \sum\limits_{\te \in A, \de \leq \te \leq \be}  \leb(R_\te \cap R) & \lkb \sum\limits_{\te \in A, \de \leq \te \leq \be} 
\de \cdot \max \left(\frac{\de}{\te},\te^s \right)= \\ 
& =\de^2 \cdot \sum\limits_{\te \in A, \de \leq \te \leq \de^{1/(s+1)}}  \frac{1}{\te} + 
\de \cdot  \sum\limits_{\te \in A, \de^{1/(s+1)} \leq \te \leq \be}  \te^s. 
\end{align*}
Easy computation gives that 
\begin{equation}
\label{eqs8}
\sum\limits_{\te \in A,  \de \leq \te \leq \de^{1/(s+1)}} \frac{1}{\te} \lkb \frac{1}{\de} \cdot \log\left(\frac{1}{\de}\right), \ 
\sum\limits_{\te \in A,  \de^{1/(s+1)} \leq \te \leq \be}  \te^s  \lkb \frac{1}{\de} \cdot \be^{s+1}. 
\end{equation}
Therefore, using the definition of $\be$, see \eqref{bedef}, easy computation gives that 
\begin{align*}
\sum\limits_{\te \in A, \de \leq \te \leq \be}  \leb(R_\te \cap R)\lkb 
\de \cdot  \log\left(\frac{1}{\de}\right) + \be^{s+1} \lkb \begin{cases}
\de \cdot  \log\left(\frac{1}{\de}\right) & \text{if} \ s \geq s_0, \\ 
\de^{s(s+1)} \cdot  \log\left(\frac{1}{\de}\right) & \text{if} \ \frac{1}{2} \leq s \leq s_0, \\
\de^{1-s^2} \cdot  \log\left(\frac{1}{\de}\right) & \text{if} \ s \leq \frac{1}{2}, 
\end{cases}
\end{align*}
where  $s_0=\frac{\sqrt{5}-1}{2}$. 
Since $s(s+1) \geq 1-s^2$ when $s \geq 1/2$, we obtain 
\begin{equation}
\label{eqs90} 
\sum\limits_{\te \in A, \de \leq \te \leq \be}  \leb(R_\te \cap R)\lkb 
\de^{1-s^2} \cdot  \log\left(\frac{1}{\de}\right)  
\end{equation}
for each $s$. 

Now we consider the large angles. 
Assume first that $s \leq 1/2$. In this case, $\te$ large implies that $\te \geq \de^{1-s}$, and $r_0= \te^{1/(1-s)}$, see \eqref{smallte}, \eqref{bedef}, and \eqref{r0def}.   
By Lemma \ref{int} and the second bound from Lemma \ref{langle} and using that $\te^{1/(1-s)} \leq \te^{s+1}$, we obtain 
$$
\leb(R_\te \cap R) \lkb \frac{1}{\de^{s^2}} \cdot \max\left(\frac{\te^{1/(1-s)}}{\te^{s+1}}, 1\right)
 \cdot \frac{\de^{2}}{\te}=\frac{\de^{2-s^2}}{\te}.$$ 
Summing up for $\te$, we get that 
$$\sum\limits_{\te \in A, \te  >\be} \leb(R_\te \cap R) \lkb \de^{2-s^2} 
\sum\limits_{\te \in A, \te  >\be} \frac{1}{\te} \lkb
\de^{1-s^2} \cdot \log\left(\frac{1}{\de}\right).$$
Now let $s>1/2$, so $r_0= \te^{1/s}$, see \eqref{r0def}. 
For $\te \geq \de^{1-s}$, we use the same bound from Lemma \ref{langle} as in the $s \leq 1/2$ case, and obtain that 
\begin{equation}
\label{final1}
\leb(R_\te \cap R) \lkb \frac{1}{\de^{s^2}} \cdot \max\left(\frac{\te^{1/s}}{\te^{s+1}}, 1\right)
 \cdot \frac{\de^{2}}{\te}= \frac{\de^{2-s^2}}{\te} \cdot \max\left(\frac{\te^{1/s}}{\te^{s+1}}, 1\right). 
\end{equation}
For $\be=\de^s \leq \te \leq \de^{1-s}$, we use the first bound from Lemma \ref{langle}, and the first trivial bound from Lemma \ref{int}. Therefore, 
\begin{equation}
\label{final2}
\leb(R_\te \cap R) \lkb \frac{1}{\de^s} \cdot \max\left(\frac{\te^{1/s}}{\te},\te^s\right) \cdot \de^{s+1} =
\de \cdot \te^s \cdot \max\left(\frac{\te^{1/s}}{\te^{s+1}}, 1\right)
\end{equation}
Easy computation gives that $\frac{\te^{1/s}}{\te^{s+1}} \leq 1$  if and only if $s \leq s_0=\frac{\sqrt{5}-1}{2}$. 

In the $s \leq s_0$ case, from \eqref{final1} and \eqref{final2} we obtain 
$$\leb(R_\te \cap R) \lkb 
\begin{cases}
\frac{\de^{2-s^2}}{\te} \ &  \text{for} \ \te \geq \de^{1-s}, \\
\de \cdot \te^s \ & \text{for} \ \de^s \leq \te \leq \de^{1-s},  
\end{cases}
$$
and then 
\begin{equation}
\label{final3}
\sum\limits_{\te \in A, \te  >\be} \leb(R_\te \cap R) \lkb \sum\limits_{\te \in A, \te  >\de^{1-s}}\frac{\de^{2-s^2}}{\te} + 
\sum\limits_{\te \in A, \de^s \leq \te \leq \de^{1-s}} \de \cdot \te^s \lkb
\de^{1-s^2}\log\left(\frac{1}{\de}\right).
\end{equation}
Combining \eqref{eqs90} with \eqref{final3}, we obtain the statement of \eqref{eqbass} for $s \leq s_0$.

In the $s \geq s_0$ case, we get 
$$\leb(R_\te \cap R) \lkb 
\begin{cases}
\frac{\de^{2-s^2}}{\te}\cdot \frac{\te^{1/s}}{\te^{s+1}} \ & \text{for} \ \te \geq \de^{1-s}, \\
\de \cdot \te^s \cdot \frac{\te^{1/s}}{\te^{s+1}} \ & \text{for} \ \de^s \leq \te \leq \de^{1-s}.   
\end{cases}
$$
Summing up for $\te$, easy computation using that $1/s-s-1 \leq 0$ as $s \geq s_0$ yields 
\begin{align}
\label{final4}
\sum\limits_{\te \in A, \te  >\be} & \leb(R_\te \cap R)  \lkb \\ 
& \sum\limits_{\te \in A, \te  >\de^{1-s}} \de^{2-s^2} \te^{1/s-s-2} + 
\sum\limits_{\te \in A, \de^s \leq \te \leq \de^{1-s}} \de \cdot \te^{1/s-1} \lkb   
 \de^{1/s-1}. \nonumber
\end{align}
Combining \eqref{eqs90} with \eqref{final4}, we obtain the statement of \eqref{eqbass} for $s \geq s_0$ as well. 

Finally, using \eqref{eqbass}, summing up for $\te' \in A$ as well as accounting for \eqref{trivi}, we get the statement of  Proposition \ref{areas}: 
$$\sum\limits_{\te,\te' \in A} \leb(R_\te \cap R_{\te'}) \lkb  \frac{1}{\de} \cdot \sum\limits_{\te \in A, \te \geq \de} \leb(R_\te \cap R) +1 \lkb
\max\left(\de^{-s^2}, \de^{1/s-2}\right) \cdot \log\left(\frac{1}{\de}\right).
$$

\end{proof}

\section{Proof of Theorem \ref{dimb} from Proposition \ref{areas}}
\label{pigeon}

The proof is based on a standard pigeonholing argument. For the sake of completeness, we include the proof. 
Let $d=\min\left(2-s^2,\frac{1}{s}\right)$, fix $0<\ep<d$, and write $u=d-\ep$. 
Let $K$ be a positive integer such that  $\sum_{l=K}^\infty 1/l^2 < 1$ and $2a^{-K} \leq \de_0$. 
Moreover, let $0<\eta \leq 2a^{-K}$. 
We will show that  $\hau^u_\eta(E) \gkb_\ep 1$ (independently of $\eta$), so $\dim E \geq d$. 

Let $E \su \bigcup_{i=1}^{\infty} B(x_i,r_i)$ be any countable cover of $E$ consisting of disks with $2 r_i \leq \eta \leq 2a^{-K}$ for all $i$.
For any $l \geq K$, let 
$$J_l=\left\{ i : a^{-l-1} < r_i \leq a^{-l} \right\}, \ \text{and let} \ E_l=\left(\bigcup_{i \in J_l} B(x_i,r_i)\right) \cap E.$$ 
Then $E=\bigcup_{l=K}^{\infty} E_l$, and $E_l$ is covered by disks of radius $\approx a^{-l}$.  
Using a pigeonholing argument, we prove the following: 
\begin{lemma}
\label{pig}
There exists an integer $l \geq K$ such that 
$$\la\left(\te \in  [0,1]: \hau^1_\infty(E_l \cap \Ga_\te) \geq 1/l^2\right) \geq 1/l^2.$$
\end{lemma}
\begin{proof}
Assume for contradiction that no such $l$ exists. Then for each $l$, 
$$\la\left(\te \in  [0,1]: \hau^1_\infty(E_l \cap \Ga_\te\right) \geq 1/l^2)< 1/l^2,$$ and 
$\sum_{l=K}^\infty 1/l^2 <1=\la( [0,1])$, so these sets can not cover $[0,1]$. 
We note that the measurability of the above sets can be checked similarly as in the proof of \cite[Lemma 3.1]{HKM}. 
Therefore, there exists $\te$ such that for all $l \geq K$, $\hau^1_\infty(E_l \cap \Ga_\te)< 1/l^2$. 
But then, using that by construction, $1\leq \hau^1_\infty ([0,1]) \leq  \hau^1_\infty(\Ga_\te) \leq 1$, we have 
$$1=\hau^1_\infty(\Ga_\te) \leq \sum_{l=K}^\infty \hau^1_\infty(E_l \cap \Ga_\te)  < \sum_{l=K}^\infty 1/l^2 <1,$$
which is a contradiction. 
\end{proof}

Fix an $l$ obtained from Lemma \ref{pig}, and let 
$$A_0=\left\{\te \in  [0,1]: \hau^1_\infty(E_l \cap \Ga_\te) \geq 1/l^2\right\},$$ so we have $\la(A_0) \geq 1/l^2$. 
Letting $\ti{\Ga_\te}=\Ga_\te \cap E_l$ for $\te \in A_0$, by construction we have $\hau^1_\infty(\ti{\Ga_\te})\geq 1/l^2$. 
Let $\ti{E}=\bigcup_{\te \in A_0} \ti{\Ga_\te} \su E_l \su E.$

\begin{claim}
\label{neigh}
Let $\de=a^{-l}$, where $l$ is fixed as above. Then $\la(\ti{E}(\de)) \gkb  \frac{\de^{2-d}}{\log^{9}\left(1/\de\right)}.$
\end{claim}
Assuming Claim \ref{neigh} and using that by construction, 
$\ti{E}(\de) \su \bigcup_{i \in J_l} B(x_i,r_i+\de) \su \bigcup_{i \in J_l} B(x_i,2\de)$,  we get the following: 
$$
\frac{\de^{2-d}}{\log^{9}\left(1/\de\right)}  \lkb \la(\ti{E}(\de)) \leq \la\left(\bigcup_{i \in J_l} B(x_i,2\de)\right) \lkb |J_l| \cdot \de^2,
$$
so $|J_l| \gkb \frac{\de^{-d}}{\log^{9} (1/\de)}$. 
Then we have 
\begin{align*}
\sum_{i=1}^{\infty} (2 r_i)^u & \geq   \sum_{i \in J_l} (2 r_i)^u \gkb 
\sum_{i \in J_l} \left(a^{-l}\right)^u = |J_l| \cdot \de^u \gkb \frac{\de^{-d+d-\ep}}{\log^{9}(1/\de)} =\frac{\de^{-\ep}}{\log^{9} (1/\de)} \gkb_\ep 1.
\end{align*}
This means that $\hau^u_\eta(B) \gkb_\ep 1$, and  the proof concludes.

\begin{proof}[Proof of Claim \ref{neigh}] 

Let $\ti{R_\te}=\ti{\Ga_\te}(\de) \su \Ga_\te(\de) \su R_\te$ for $\te \in A_0$. Then $\bigcup_{\te \in A_0} \ti{R_\te}=\ti{E}(\de)$. 
Let $\ti{A}$ be a maximal $\de$-separated subset of $A_0$. Then clearly, 
$A_0 \su \bigcup_{\te \in \ti{A}} (\te-\de,\te +\de).$ 
Therefore we get 
$\la(A_0) \lkb |\ti{A}| \cdot \de$, and by the definition of $A_0$ and $\de$, we obtain that 
\begin{equation}
\label{eqc1}
|\ti{A}| \gkb \la(A_0) \cdot \de^{-1} \geq \frac{1}{l^2} \cdot \de^{-1} \gkb \frac{1}{\log^2(1/\de) \cdot \de}. 
\end{equation}

By the Cauchy-Schwarz inequality, and using that trivially, $\bigcup_{\te \in \ti{A}} \ti{R_\te} \su \ti{E}(\de) $ and $\ti{R_\te}\su R_\te$, we get the following: 
\begin{align}
\label{eqc2}
\sum_{\te \in \ti{A}} \leb(\ti{R_\te}) & \leq \leb\left(\bigcup_{\te \in \ti{A}} \ti{R_\te}\right)^{1/2} \cdot 
\left(\sum_{\te, \te' \in \ti{A}} \leb(\ti{R_{\te'}} \cap \ti{R_\te})\right)^{1/2} \leq \\
& \leq \leb\left(\ti{E}(\de)\right)^{1/2} \cdot \left(\sum_{\te, \te' \in \ti{A}}  \leb(R_{\te'} \cap R_\te)\right)^{1/2}. \nonumber 
\end{align}
By Proposition \ref{areas} applied for the above $\de$ and $A=\ti{A}$, we have 
\begin{equation}
\label{eqck}
\sum\limits_{\te,\te' \in \ti{A}} \leb(R_\te \cap R_{\te'}) \lkb \de^{d-2}\cdot \log(\frac{1}{\de}).
\end{equation}
It remains to prove a lower bound for the left hand side of the Cauchy-Schwarz inequality. 
Let $N(X,\de)$ denote the smallest number of $\de$-disks needed to cover $X \su \rr^2$. Then $\hau^1_\infty(X) \leq N(X,\de) \cdot 2\de$ and 
$\leb(X(\de)) \gkb N(X,\de) \cdot \de^2$. 
Therefore we have 
$$
 \leb(\ti{R_\te})=\leb(\ti{\Ga_\te}(\de)) \gkb  N(\ti{\Ga_\te},\de) \cdot \de^2 \gkb \hau^1_\infty(\ti{\Ga_\te}) \cdot \de  \gkb \frac{\de}{\log^2(1/\de)}.$$
Summing up for $\te \in \ti{A}$ and using \eqref{eqc1}, we get 
\begin{equation}
\label{eqce}
\sum_{\te \in \ti{A}} \leb(\ti{R_\te})  \gkb | \ti{A}| \cdot \frac{\de}{\log^2(1/\de)} \gkb \frac{1}{\log^4(1/\de)}.
\end{equation}
Finally, combining \eqref{eqc2}, \eqref{eqck} and \eqref{eqce} we get 
$$\leb(\ti{E}(\de)) \gkb \frac{1}{\log^8(1/\de)} \cdot \frac{1}{\log(1/\de)} \cdot \de^{2-d}= \frac{\de^{2-d}}{\log^{9}(1/\de)},$$
and this was the statement of Claim \ref{neigh}.

\end{proof}

\textit{Acknowledgment.}
We thank the anonymous referee for the careful reading and valuable comments.


\end{document}